\documentclass[article,12pt]{amsart}


\usepackage[T1]{fontenc}
\usepackage{inputenc}
\usepackage[francais,english]{babel}
\usepackage{amsmath}
\usepackage{latexsym}
\usepackage{marvosym}
\usepackage{amsfonts}
\usepackage{amsthm}
\usepackage{float}
\usepackage{color}
\usepackage{epsfig}
\usepackage{graphicx}
\usepackage{amssymb}


\setcounter{MaxMatrixCols}{10}

\setlength{\oddsidemargin}{0.1cm}   
\setlength{\evensidemargin}{-0.1cm}  
\setlength{\topmargin}{-1cm}  
\setlength{\textwidth}{15.cm} 
\setlength{\textheight}{22.cm}
 
 \newcommand{\E}{\mathbb{E}}

\newcommand{\R}{\mathbb{R}}


\newcommand{\dR}{\mathbb{R}}
\newcommand{\dE}{\mathbb{E}}

\newcommand{\cF}{\mathcal{F}}

\newcommand{\rI}{\mathrm{I}}
\newcommand{\wh}{\widehat}

\newcommand{\ind}{\mbox{1}\kern-.25em \mbox{I}}
\font\calcal=cmsy10 scaled\magstep1
\def\build#1_#2^#3{\mathrel{\mathop{\kern 0pt#1}\limits_{#2}^{#3}}}
\def\liml{\build{\longrightarrow}_{}^{{\mbox{\calcal L}}}}

\def\videbox{\mathbin{\vbox{\hrule\hbox{\vrule height1ex \kern.5em
\vrule height1ex}\hrule}}}

\numberwithin{equation}{section}
\theoremstyle{plain}
\newtheorem{thm}{Theorem}[section]
\newtheorem{rem}{Remark}[section]
\newtheorem{lem}{Lemma}[section]

\keywords{Semiparametric estimation, estimation of shifts, estimation of a regression function, asymptotic properties}
\subjclass[2010]{Primary:  62G05, Secondary: 62G20}

\begin{document}
\title[Parametric estimation of deformations on random variables]
{A Robbins Monro procedure for the estimation of parametric deformations on random variables}
\author{Philippe Fraysse}
\address{Universit\'e de Bordeaux, Institut de Math\'ematiques de Bordeaux, UMR CNRS 5251 
F-33400 Talence, France.}
\author{Helene Lescornel}
\address{Universit\'e Paul Sabatier, Institut de Math\'ematiques de Toulouse, UMR CNRS 5219 
F-31062 Toulouse, France.}
\author{Jean-Michel Loubes}

\begin{abstract}
 The paper is devoted to the study of a parametric deformation model of independent and identically random variables. Firstly, we construct an efficient and very easy to compute recursive estimate of the parameter. Our stochastic estimator is similar to the Robbins-Monro procedure where the contrast function is the Wasserstein distance. Secondly, we propose a recursive estimator similar to that of Parzen-Rosenblatt kernel density estimator in order to estimate the density of the random variables. This estimate takes into account the previous estimation of the parameter of the model. Finally, we illustrate the performance of our estimation procedure on simulations for the Box-Cox transformation and the arcsinh transformation.
\end{abstract}

\maketitle


\section{INTRODUCTION}
In many situations, random variables are not directly observed but only their image by a deformation is available. Hence, finding the {\it mean behaviour} of a  data sample becomes a difficult task since   the usual notion of Euclidean mean is too rough when the information conveyed by the data possesses an inner geometry far from the Euclidean one. Indeed, deformations on the data such as translations, scale location models for instance or more general warping procedures prevent the use of the usual methods in data analysis.  \vskip .1in
On the one hand, the deformations may result from some variations which are not directly correlated to the studied phenomenon. This situation occurs often in biology  for example when considering gene expression data obtained from microarray technologies to measure genome wide expression levels of genes in a given organism as described in \cite{Bolstad-03}.  A natural way to handle this phenomena is to remove these variations in order to align the measured densities. However, it is quite difficult to implement since the densities are unknown. In bioinformatics and computational biology, a method to reduce this kind of variability is known as normalization (see \cite{ GALLON-2011-593476} and references therein). \\ In epidemiology, removing variations is important in medical studies, where one observes age-at-death of several cohorts. Indeed, the individuals or animals members of the cohort enjoy different life conditions which means that time-variation  is likely to exist between the cohort densities and hazard rates due the effects of the different biotopes  on aging. Synchronization of the different observations is thus a crucial point before any statistical study of the data. \vskip .1in
On the other hand,  the variations on the observations are often due to transformations that have been conducted by the statisticians themselves. In econometric science, transformations have been used to aid interpretability as well as to improve statistical performance of some indicators. An important contribution to this methodology was made by Box and Cox in \cite{MR0192611}  who proposed a parametric
power family of transformations that nested the logarithm and the level. Estimation in this framework is achieved in \cite{MR2396812}.\vskip .1in
In this work, we concentrate on the case where the data and their transformation are observed in a  sequence model defined, for all $n\geq0$, by
\begin{equation}
\label{model}
X_{n}=\varphi_{\theta}(\varepsilon_{n})
\end{equation}
where, for all $t\in{\mathbb{R}}$, the family of parametric functions $\left(\varphi_{t}\right)$ is known and $\left(\varepsilon_{n}\right)$ is a sequence of independent and identically distributed random variables.
Our main goal is to estimate recursively the unknown parameter $\theta$ by privilegiating an alignment in distribution. More precisely, our approach to estimate $\theta$ is associated with a stochastic recursive 
algorithm similar to that of Robbins-Monro described in \cite{RobbinsMonro51} and \cite{RobbinsSiegmund71}.
\par
Assume that one can find a function $\phi$ (called contrast function) free of the parameter $\theta$, such that $\phi(\theta)=0$. Then, it is possible to estimate
$\theta$ by the Robbins-Monro algorithm
\begin{equation}
\label{RMalgo}
\widehat{\theta}_{n+1}=\widehat{\theta}_{n}+\gamma_{n}T_{n+1}
\end{equation}
where $(\gamma_n)$ is a positive sequence of real numbers decreasing towards zero and
$(T_n)$ is a sequence of random variables such that 
$\dE[T_{n+1}|\cF_n]=\phi(\widehat{\theta}_{n})$
where $\mathcal{F}_{n}$ stands for the $\sigma $-algebra of the events occurring up to time $n$.
Under standard conditions on the function $\phi$ and on the sequence $(\gamma_n)$, it is well-known (see in \cite{Duflo97} and \cite{KushnerYin03}) that
$\widehat{\theta}_{n}$ tends to $\theta$ almost surely. The asymptotic normality of $\widehat{\theta}_{n}$ together
with the quadratic strong law may also be found in \cite{HallHeyde80}. A randomly truncated
version of the Robbins-Monro algorithm is also given in  \cite{Chen88}, \cite{Lelong08}, whereas we can find in \cite{BF10} an application of the Robbins-Monro algorithm in semiparametric regression models.
In our framework, if we assume that $\varphi_{t}$ is inversible, then one can consider
$$
Z_{n}(t)=\varphi_{t}^{-1}\left(X_{n}\right).
$$
Hence, a natural registration criterion is to minimize with respect to $t$ the quadratic distance between $Z_{n}(t)$ and $\varepsilon_{n}$
$$
M(t)=\E \left[ \left\vert Z_{n}(t) - \varepsilon_{n}\right\vert^ 2 \right].
$$
It is then obvious that the parameter $\theta$ is a global minimum of $M$ and one can implement a Robbins-Monro procedure for the contrast function $M^{\prime}$, which is the differential of the $L^2$ function $M$.
\par
The second part of the paper concerns the estimation of the density $f$ of the random variables $\left(\varepsilon_{n}\right)$. More precisely, we focus our attention on the Parzen-Rosenblatt estimator of $f$ described for instance in \cite{Parzen62} or \cite{Rosenblatt56} .
Under reasonable conditions on the function $f$, Parzen  established in \cite{Parzen62} the pointwise convergence in probability and the asymptotic normality of the estimator without the parameter $\theta$. In \cite{Silverman78},  Silverman obtained uniform consistency properties of the estimator. Moreover, important contributions on the $L^{1}$-integrated risk has been obtained by Devroye in \cite{Devroye88} whereas Hall  has studied in \cite{Hall82} and \cite{Hall84} the $L^{2}$-integrated risk. In our situation, we propose to make use of a recursive Parzen-Rosenblatt estimator of $f$ which
takes into account the previous estimation of the parameter $\theta$.
It is given, for all $x\in \dR$, by
\begin{equation}
\label{RNW}
\widehat{f}_{n}(x)=\frac{1}{n}\sum_{i=1}^{n} W_{i}(x)
\end{equation}
with
$$
W_{i}(x)=\frac{1}{h_{i}}K\left(\frac{x-Z_{i}(\widehat{\theta}_{i-1})}{h_{i}}\right)
$$
where the kernel $K$ is a chosen probability density function and  
the bandwidth $(h_i)$ is a sequence of positive real numbers  
decreasing to zero. The main difficulty arising here is that we have to deal with the term $Z_{i}(\widehat{\theta}_{i-1})$ inside the kernel $K$. 
 \vskip .1in
The paper falls into the following parts. Section 2 is devoted to the description of the model. Section 3 
deals with the parametric estimation of $\theta$. We establish the almost sure convergence of $\widehat{\theta}_{n}$ 
as well as its asymptotic normality. In Section 4, under standard regularity assumptions on the kernel $K$,
we prove the almost sure pointwise and quadratic convergences of $\widehat{f}_{n}(x)$ to $f(x)$. Section 5 contains some numerical experiments on the well known Box-Cox transformation and on the arsinh transformation illustrating
the performances of our parametric estimation procedure. 
The proofs of the parametric results are given is Section 6, while those concerning 
the nonparametric results are postponed to Section 7.



\section{DESCRIPTION OF THE MODEL AND THE CRITERION}


Suppose that we observe independent and identically distributed random variables $\varepsilon_{n}$ and a deformation $X_{n}$ of $\varepsilon_{n}$ according to the model \eqref{model} defined, for all $n\geq0$, by 
$$
X_{n}=\varphi_{\theta}(\varepsilon_{n})
$$
where $\theta\in \Theta \subset \dR$. Throughout the paper, we denote by $\varepsilon$ and $X$ random variables sharing the same distribution as $\varepsilon_{n}$ and $X_{n}$, respectively.

Assume that for all $t\in{\dR}$, the family of parametric functions $\left(\varphi_{t}\right)$ is known but  that the parameter $\theta$ is unknown. This situation corresponds to the case where the warping operator can be modeled by a  parametric shape. Estimating the parameter is the key to understand the amount of deformation in the chosen deformation class. This model has been widely used in the regression case, see for instance in \cite{ Gamboa-Loubes-Maza-07}. Assume also that for all $t\in{\dR}$, $\varphi_{t}$ is invertible on an interval which will be made precise in the next section. Then, one can consider the random variable $Z_{n}(t)$ defined as
\begin{equation}
\label{defZ}
Z_{n}(t)=\varphi_{t}^{-1}(X_{n})=\varphi_{t}^{-1}\left(\varphi_{\theta}(\varepsilon_{n})\right).
\end{equation}
We also denote by $Z(t)$ a random variable sharing the same distribution as $Z_{n}(t)$.
In order to estimate $\theta$, we choose to evaluate the $L^2$ distance between $\varepsilon$ and $Z(t)$ which is given by
\begin{equation}
\label{defM}
M(t)=\E \left[ \left\vert Z(t) - \varepsilon\right\vert^ 2 \right].
\end{equation}
\\
Denoting $F^{-1}$ the quantile function associated with $\varepsilon$, it can be rewritten as 
$$
M(t)=\E \left[ \left\vert \varphi_{t}^{-1}\left(\varphi_{\theta}(\varepsilon)\right) - \varepsilon\right\vert^ 2 \right]=\int_{0}^{1}\left(\varphi_{t}^{-1}\circ\varphi_{\theta}\circ F^{-1}(x)-F^{-1}(x)\right)^2dx.$$
Indeed (see for instance in \cite{van2000asymptotic} p.305) it is well-known that if $Y$ is a random variable with distribution function $G$, then for $U \sim \mathcal{U}_{\left[0;1 \right]}$, $Y \sim G^{-1} \left( U \right)$. 

 Moreover, if we assume that for all $t$, $\varphi_t$ is increasing, then one have the following expression for the quantile function associated with ${Z(t)}$: $F_{Z(t)}^{-1}=\varphi_{t}^{-1}\circ\varphi_{\theta}\circ F^{-1}$ and so
$$M(t)=\int_{0}^{1}\left( F_{Z(t)}^{-1}(x)-F^{-1}(x)\right)^2dx.$$

This quantity corresponds to the Wasserstein distance between the laws of ${Z(t)}$ and 
$\varepsilon$,  defined and studied for instance in \cite{cuesta1989notes} in general case. Using Wasserstein metrics to align distributions is rather natural since it corresponds to the transportation cost between two probability laws. It is also a proper criterion to  study similarities between point distributions (see for instance in \cite{ MR1625620}) which is already used for density registration in \cite{lescornel:hal-00749519_1} or \cite{ GALLON-2011-593476} in a non sequential way. \\
Hence, in this setting, considering the $L^2$ distance between  the starting point and the registered point is equivalent  to investigate the Wasserstein distance  between their laws. \vskip .1in

As $M(\theta)=0$ and the function $M$ defined by  \eqref{defM} is non-negative, it is clear that $M$ admits at least a global minimum at $\theta$ which permits to have a characterization of the parameter of interest.

\section{ESTIMATION OF THE PARAMETER $\theta$}

In this section, we focus our attention on the estimation of the parameter $\theta\in{\Theta}$ where $\Theta$ is supposed to be an interval of $\mathbb{R}$. Before implementing the estimation procedure for $\theta$,  several hypothesis on the model \eqref{model} are required. 
\begin{itemize}
\item[ ]\begin{equation}
\label{a1} \text{For all } t \in\Theta,~\varphi_t \text{ is  invertible, increasing from } I_{1}\text{ to } I_{2},\text{ some subsets of } \dR.  \tag{A1}
\end{equation}

\item[] \begin{align}
\label{a2}
\tag{A2}
&\text{For all } x \in I_{2},~\varphi^{- 1}_t (x) \text{ is continuously differentiable with respect to }t \in \Theta.\notag\\
\notag&\text{Its derivative is denoted by }\partial \varphi^{- 1}_t(x).
\end{align}

\item[] \begin{equation}
\label{a3}
\text{For all } t \in\Theta,~~
\varphi^{- 1}_t\circ \varphi_{\theta} \in L^2 \left(\varepsilon\right). \tag{A3}
\end{equation}

\item[] \begin{equation}
\label{a4}
\text{ For all compact $B$ in }\Theta,~~\dE \left[ \sup_{t \in B } \left\vert\partial \varphi^{- 1}_t\circ \varphi_{\theta}\left( \varepsilon \right) \right\vert^4\right] < +\infty.
\tag{A4}
\end{equation}

\end{itemize}

From assumption \eqref{a1}, the distribution function of $X$ is $F_{X}=F\circ\varphi_{\theta }^{-1}$ whereas that of $Z(t)$ is $F\circ\varphi_{\theta }^{-1}\circ\varphi_{t}$.

\begin{lem}
\label{MC1}
Assume \eqref{a1} to \eqref{a4}. Then $M$ is continuously differentiable on $\Theta$.
\end{lem}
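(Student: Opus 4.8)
The plan is to differentiate under the integral sign in the representation
$$
M(t)=\int_{0}^{1}\left(\varphi_{t}^{-1}\circ\varphi_{\theta}\circ F^{-1}(x)-F^{-1}(x)\right)^2dx
$$
and then verify that the resulting candidate for $M'$ is continuous in $t$. Equivalently, I would work with the expectation form $M(t)=\E\left[\left(\varphi_{t}^{-1}\circ\varphi_{\theta}(\varepsilon)-\varepsilon\right)^2\right]$ and differentiate under the expectation. The pointwise $t$-derivative of the integrand is, by \eqref{a2} and the chain rule,
$$
g(t,\varepsilon):=2\left(\varphi_{t}^{-1}\circ\varphi_{\theta}(\varepsilon)-\varepsilon\right)\,\partial\varphi_{t}^{-1}\circ\varphi_{\theta}(\varepsilon),
$$
so the claim will be that $M$ is differentiable with $M'(t)=\E[g(t,\varepsilon)]$ and that $t\mapsto\E[g(t,\varepsilon)]$ is continuous.

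First I would fix an arbitrary compact interval $B\subset\Theta$ and argue on $B$; since every point of $\Theta$ has such a compact neighbourhood, this suffices. On $B$, assumption \eqref{a2} guarantees that for almost every $\varepsilon$ the map $t\mapsto\varphi_{t}^{-1}\circ\varphi_{\theta}(\varepsilon)$ is $C^1$, so the integrand is $C^1$ in $t$ with derivative $g(t,\varepsilon)$. To apply the dominated convergence theorem for differentiation under the integral sign, I need an integrable dominating function for $\sup_{t\in B}|g(t,\varepsilon)|$. By Cauchy--Schwarz (in the factorised bound $|g(t,\varepsilon)|\le 2\,|\varphi_{t}^{-1}\circ\varphi_{\theta}(\varepsilon)-\varepsilon|\cdot|\partial\varphi_{t}^{-1}\circ\varphi_{\theta}(\varepsilon)|$), it is enough to control $\E[\sup_{t\in B}|\varphi_{t}^{-1}\circ\varphi_{\theta}(\varepsilon)-\varepsilon|^2]$ and $\E[\sup_{t\in B}|\partial\varphi_{t}^{-1}\circ\varphi_{\theta}(\varepsilon)|^2]$. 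The second is finite by \eqref{a4} (which gives a finite fourth moment, hence a finite second moment). For the first, I would write $\varphi_{t}^{-1}\circ\varphi_{\theta}(\varepsilon)-\varepsilon=\int_{t_0}^{t}\partial\varphi_{s}^{-1}\circ\varphi_{\theta}(\varepsilon)\,ds$ for a fixed $t_0\in B$ (using \eqref{a3} to ensure the endpoint value at $t_0$ is in $L^2$, and that $\varphi_{t_0}^{-1}\circ\varphi_\theta(\varepsilon)-\varepsilon \in L^2$), so that $\sup_{t\in B}|\varphi_{t}^{-1}\circ\varphi_{\theta}(\varepsilon)-\varepsilon|\le |\varphi_{t_0}^{-1}\circ\varphi_\theta(\varepsilon)-\varepsilon| + |B|\cdot\sup_{s\in B}|\partial\varphi_{s}^{-1}\circ\varphi_{\theta}(\varepsilon)|$, whose square is integrable again by \eqref{a3} and \eqref{a4}. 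This produces an $L^1$ dominating function for $\sup_{t\in B}|g(t,\varepsilon)|$, and the standard differentiation-under-the-integral theorem yields $M'(t)=\E[g(t,\varepsilon)]$ on the interior of $B$.

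It remains to show $t\mapsto\E[g(t,\varepsilon)]$ is continuous on $B$. For a sequence $t_n\to t$ in $B$, $g(t_n,\varepsilon)\to g(t,\varepsilon)$ almost surely by the continuity part of \eqref{a2}, and $|g(t_n,\varepsilon)|$ is bounded by the same $L^1$ dominating function constructed above; dominated convergence then gives $\E[g(t_n,\varepsilon)]\to\E[g(t,\varepsilon)]$, i.e. continuity of $M'$. I expect the main obstacle to be precisely the construction of the uniform-in-$t$ integrable bound on the ``deformation minus identity'' factor $\varphi_{t}^{-1}\circ\varphi_{\theta}(\varepsilon)-\varepsilon$: assumption \eqref{a4} only controls the derivative $\partial\varphi_{t}^{-1}\circ\varphi_{\theta}(\varepsilon)$ uniformly on compacts, while \eqref{a3} controls $\varphi_{t}^{-1}\circ\varphi_{\theta}$ only pointwise in $t$, so the fundamental-theorem-of-calculus trick tying the two together on a compact interval is the crux of the argument; everything else is a routine application of dominated convergence.
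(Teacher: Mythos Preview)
Your proof is correct and follows essentially the same route as the paper: differentiate under the integral sign on a compact $B\subset\Theta$, with the key step being the control of the factor $\varphi_t^{-1}\circ\varphi_\theta(\varepsilon)-\varepsilon$ via the fundamental theorem of calculus / mean value theorem in the $t$-variable, combined with \eqref{a4}.

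The only difference is a small simplification you missed. The paper chooses the compact $B$ so that it contains $\theta$ (always possible since $\Theta$ is an interval). Then the basepoint in the mean value argument can be taken to be $\theta$ itself, where $\varphi_\theta^{-1}\circ\varphi_\theta(\varepsilon)-\varepsilon=0$, giving directly
\[
\sup_{t\in B}\bigl|\varphi_t^{-1}\circ\varphi_\theta(\varepsilon)-\varepsilon\bigr|\le C_B\sup_{s\in B}\bigl|\partial\varphi_s^{-1}\circ\varphi_\theta(\varepsilon)\bigr|,
\]
so that $\sup_{t\in B}|g(t,\varepsilon)|\le 2C_B\sup_{s\in B}|\partial\varphi_s^{-1}\circ\varphi_\theta(\varepsilon)|^2$, which is integrable by \eqref{a4} alone. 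This avoids the endpoint term from \eqref{a3} and the appeal to Cauchy--Schwarz; what you flagged as the main obstacle thus disappears once the basepoint is chosen to be $\theta$.
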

Using Lemma~\ref{MC1},  the differential $M^{\prime}$ of $M$ has the following expression for all $t\in{\Theta}$,
\begin{align}
\label{defMprime}
M^{\prime}(t)=& -2\int_{0}^{1}\partial\varphi_{t}^{-1}\circ\varphi_{\theta}\circ F^{-1}(x)\left(F^{-1}(x)-\varphi_{t}^{-1}\circ\varphi_{\theta}\circ F^{-1}(x)\right) dx \notag \\
=& -2\E \left[ \partial\varphi_{t}^{-1}\left(X\right)\left(\varepsilon-\varphi_{t}^{-1}\left(X\right)\right)\right]
.
\end{align}
It is then clear that $M^{\prime}(\theta)=0$. Then, we can assume that there exists $\{a,b\}\in{\Theta^{2}}$ with $a<b$ and $\theta\in{]a;b[} \subset\Theta$ such that, for all $t\in{[a;b]}$,
\begin{equation}
\label{a5}
(t-\theta)M^{\prime}(t)>0.
\tag{A5}
\end{equation}
We are now in position to implement our Robbins-Monro procedure. More precisely, denote by $\pi_{[a;b]}$ the projection on the compact set $[a;b]$ defined for all $x\in{[a;b]}$ by
$$
\pi_{[a;b]}(x)=x\rI_{\{a\leq{x}\leq{b}\}}+a\rI_{\{x\leq{a}\}}+b\rI_{\{x\geq{b}\}}.
$$
Let $(\gamma_n)$ be a decreasing sequence of positive real numbers satisfying
\begin{equation}
\label{hypgamma}
\sum_{n=1}^\infty\gamma_{n}=+\infty
\hspace{1cm}\text{and}\hspace{1cm}
\sum_{n=1}^\infty\gamma_{n}^2<+\infty.
\end{equation}
We estimate the parameter $\theta$ via the projected Robbins-Monro algorithm
\begin{equation}
\label{RMA}
\wh{\theta}_{n+1}=\pi_{[a;b]}\Bigl(\wh{\theta}_{n}-\gamma_{n+1}T_{n+1}\Bigr)
\end{equation}
where the deterministic initial value $\wh{\theta}_{0} \in [a;b]$ and the random variable $T_{n+1}$ is defined by
\begin{equation}
\label{DefT}
T_{n+1}=-2\partial\varphi_{\wh{\theta}_{n}}^{-1}\left(X_{n+1}\right)\left(\varepsilon_{n+1}-\varphi_{\wh{\theta}_{n}}^{-1}\left(X_{n+1}\right)\right).
\end{equation}

Our results of convergence for the estimator $\wh{\theta}_{n}$ are as follows.

\begin{thm}
\label{ASCV}
Assume \eqref{a1} to \eqref{a5}, with $\theta\in{]a;b[}$ where $a<b$. Then, $\wh{\theta}_{n}$ converges almost surely to $\theta$.
\end{thm}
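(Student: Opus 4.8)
The plan is to run the classical stochastic--approximation argument built on the Robbins--Siegmund almost supermartingale lemma \cite{RobbinsSiegmund71}. Set $V_n=(\wh\theta_n-\theta)^2$. First I would record the innovation identity: since $\varepsilon_{n+1}$, hence $X_{n+1}=\varphi_\theta(\varepsilon_{n+1})$, is independent of $\cF_n$ while $\wh\theta_n$ is $\cF_n$-measurable with values in the compact $[a;b]$, the second expression for $M'$ in \eqref{defMprime} gives $\dE[T_{n+1}\mid\cF_n]=M'(\wh\theta_n)$. Using that $\pi_{[a;b]}$ is $1$-Lipschitz and fixes $\theta\in[a;b]$, the recursion \eqref{RMA} gives the deterministic inequality
\begin{equation*}
V_{n+1}\le\bigl(\wh\theta_n-\gamma_{n+1}T_{n+1}-\theta\bigr)^2
=V_n-2\gamma_{n+1}(\wh\theta_n-\theta)T_{n+1}+\gamma_{n+1}^2T_{n+1}^2 .
\end{equation*}

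Next I would establish the key bound on the noise term. Writing $\varphi_{\wh\theta_n}^{-1}(X_{n+1})=\varphi_{\wh\theta_n}^{-1}\circ\varphi_\theta(\varepsilon_{n+1})$ and applying the mean value theorem to $t\mapsto\varphi_t^{-1}\circ\varphi_\theta(\varepsilon_{n+1})$, which is $C^1$ by \eqref{a2}, together with $\varphi_\theta^{-1}\circ\varphi_\theta(\varepsilon_{n+1})=\varepsilon_{n+1}$, I would get
\begin{equation*}
\bigl|\varepsilon_{n+1}-\varphi_{\wh\theta_n}^{-1}(X_{n+1})\bigr|
\le|\wh\theta_n-\theta|\,\sup_{s\in[a;b]}\bigl|\partial\varphi_s^{-1}\circ\varphi_\theta(\varepsilon_{n+1})\bigr| ,
\end{equation*}
hence from \eqref{DefT} that $T_{n+1}^2\le 4\,V_n\,\bigl(\sup_{s\in[a;b]}|\partial\varphi_s^{-1}\circ\varphi_\theta(\varepsilon_{n+1})|\bigr)^4$. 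Taking conditional expectations and invoking \eqref{a4} yields $\dE[T_{n+1}^2\mid\cF_n]\le C\,V_n$ for a finite deterministic constant $C$ (the same Cauchy--Schwarz computation gives integrability of $T_{n+1}$, so the algorithm is well defined). Plugging this in, and using that $(\wh\theta_n-\theta)M'(\wh\theta_n)\ge 0$ by \eqref{a5} since both $\wh\theta_n$ and $\theta$ lie in $[a;b]$, I arrive at
\begin{equation*}
\dE[V_{n+1}\mid\cF_n]\le\bigl(1+C\gamma_{n+1}^2\bigr)V_n-2\gamma_{n+1}(\wh\theta_n-\theta)M'(\wh\theta_n).
\end{equation*}
Since $\sum_n\gamma_n^2<\infty$ by \eqref{hypgamma}, the Robbins--Siegmund lemma applies: $V_n$ converges almost surely to a finite random variable $V_\infty\ge 0$, and $\sum_n\gamma_{n+1}(\wh\theta_n-\theta)M'(\wh\theta_n)<\infty$ almost surely.

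Finally I would identify $V_\infty=0$. Arguing by contradiction, decompose $\{V_\infty>0\}=\bigcup_{k\ge1}\{V_\infty>1/k\}$; on $\{V_\infty>1/k\}$ there is a random rank $N$ beyond which $\wh\theta_n$ stays in the compact set $K_k=\{t\in[a;b]:|t-\theta|\ge(2k)^{-1/2}\}$, which excludes $\theta$. By Lemma~\ref{MC1} the map $t\mapsto(t-\theta)M'(t)$ is continuous, and by \eqref{a5} strictly positive on $K_k$, hence bounded below there by some $c_k>0$; then $\sum_n\gamma_{n+1}(\wh\theta_n-\theta)M'(\wh\theta_n)\ge c_k\sum_{n\ge N}\gamma_{n+1}=+\infty$ because $\sum_n\gamma_n=+\infty$, contradicting the finiteness just obtained. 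Therefore $\p(V_\infty>1/k)=0$ for all $k$, so $V_\infty=0$ a.s.\ and $\wh\theta_n\to\theta$ a.s. The step I expect to be the main obstacle is the control of the noise: turning the supremum-over-compacts moment bound \eqref{a4} into a genuine deterministic bound on $\dE[T_{n+1}^2\mid\cF_n]$ (the mean value theorem trick above is what makes it work and also produces the convenient factor $V_n$); after that, the separation argument ruling out $V_\infty>0$ and the Robbins--Siegmund bookkeeping are routine.
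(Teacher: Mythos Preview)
Your proof is correct and follows essentially the same route as the paper's: both compute $\dE[T_{n+1}\mid\cF_n]=M'(\wh\theta_n)$, use the mean value theorem together with \eqref{a4} to bound $\dE[T_{n+1}^2\mid\cF_n]$ by a constant times $V_n$, exploit the $1$-Lipschitz property of $\pi_{[a;b]}$ to get the almost-supermartingale inequality, apply Robbins--Siegmund, and then rule out $V_\infty>0$ by a compactness/contradiction argument using \eqref{a5} and the divergence of $\sum\gamma_n$. Your identification step (decomposing $\{V_\infty>0\}$ into $\bigcup_k\{V_\infty>1/k\}$ and using continuity of $t\mapsto(t-\theta)M'(t)$ on the compact annulus) is somewhat more carefully written than the paper's sketch, but the argument is the same.
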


In order to get a control on the rate of convergence of $\wh{\theta}_{n}$ towards $\theta$, we need  to assume the following  slightly stronger condition of regularity on the deformation functions.
\begin{itemize}
\item[] \begin{align}
\label{a6}
\tag{A6}
&\text{For all }x \in I_{2},~~\varphi^{- 1}_t (x) \text{ is twice differentiable with respect to }t \in \Theta \notag
\\ \notag&\text{and
for all compact $B$ in } \Theta,~~\dE \left[ \sup_{t \in B } \left\vert\partial^2 \varphi^{- 1}_t\circ \varphi_{\theta}\left( \varepsilon \right) \right\vert^2\right] < +\infty.
\end{align}
\end{itemize}

\begin{lem}
\label{MC2}
Assume \eqref{a1} to \eqref{a6}. Then $M$ is twice continuously differentiable on $\Theta$.
\end{lem}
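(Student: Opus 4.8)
The plan is to differentiate the expression~\eqref{defMprime} for $M'$ under the expectation sign, which will give that $M$ is twice differentiable with
$$
M''(t) \;=\; -2\,\E\!\left[\partial^2\varphi_t^{-1}(X)\bigl(\varepsilon-\varphi_t^{-1}(X)\bigr) - \bigl(\partial\varphi_t^{-1}(X)\bigr)^2\right],
$$
and then to deduce continuity of $M''$ from this formula by a further dominated convergence argument. Since being $C^2$ is a local property and $\Theta$ is an interval, it is enough to argue on an arbitrary compact interval $B\subset\Theta$, which we may moreover assume contains $\theta$; all bounds below are meant uniformly for $t\in B$, and we repeatedly use $X=\varphi_\theta(\varepsilon)$, so that $\partial^k\varphi_t^{-1}(X)=\partial^k\varphi_t^{-1}\circ\varphi_\theta(\varepsilon)$ for $k=1,2$.

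First I would check, for almost every $\omega$, that $t\mapsto \partial\varphi_t^{-1}(X)\bigl(\varepsilon-\varphi_t^{-1}(X)\bigr)$ is differentiable on $B$: this follows from~\eqref{a2} together with the twice-differentiability part of~\eqref{a6}, and the product and chain rules produce the integrand displayed above. The heart of the proof is then to dominate this derivative, uniformly in $t\in B$, by one fixed integrable random variable. The term $\bigl(\partial\varphi_t^{-1}(X)\bigr)^2$ is handled directly by~\eqref{a4}, since $\sup_{t\in B}|\partial\varphi_t^{-1}\circ\varphi_\theta(\varepsilon)|\in L^4\subset L^2$. For $\partial^2\varphi_t^{-1}(X)(\varepsilon-\varphi_t^{-1}(X))$, the only quantity not covered directly by the hypotheses is $\varphi_t^{-1}(X)$ itself, and here the device is the mean value representation
$$
\varphi_t^{-1}(X) \;=\; \varphi_\theta^{-1}(X) + \int_\theta^t \partial\varphi_s^{-1}(X)\,ds \;=\; \varepsilon + \int_\theta^t \partial\varphi_s^{-1}(X)\,ds,
$$
valid for $t\in B$ by~\eqref{a2}, which yields $\sup_{t\in B}|\varphi_t^{-1}(X)|\le |\varepsilon| + |B|\,\sup_{s\in B}|\partial\varphi_s^{-1}\circ\varphi_\theta(\varepsilon)|$, where $|B|$ is the length of $B$. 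Now $\varepsilon\in L^2$ because~\eqref{a3} taken at $t=\theta$ reads $\mathrm{id}\in L^2(\varepsilon)$, so combining with~\eqref{a4} gives $\sup_{t\in B}|\varepsilon-\varphi_t^{-1}(X)|\in L^2$; together with the $L^2$-bound on $\sup_{t\in B}|\partial^2\varphi_t^{-1}\circ\varphi_\theta(\varepsilon)|$ from~\eqref{a6} and the Cauchy--Schwarz inequality, this controls $\sup_{t\in B}|\partial^2\varphi_t^{-1}(X)(\varepsilon-\varphi_t^{-1}(X))|$ in $L^1$. Adding the two contributions yields the required integrable dominating function.

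By the theorem on differentiation under the integral sign, $M'$ is then differentiable on the interior of $B$ with the announced formula for $M''$, and as $B$ is arbitrary, $M$ is twice differentiable on $\Theta$. It remains to see that $M''$ is continuous: for almost every $\omega$ the integrand in the formula for $M''$ depends continuously on $t$ (by~\eqref{a2} and~\eqref{a6}), and it is dominated on $B$ by the same integrable function just constructed, so dominated convergence gives continuity of $M''$ on $\Theta$. I expect the main obstacle to be exactly this uniform domination step: one has to upgrade the pointwise-in-$t$ integrability assumptions~\eqref{a3},~\eqref{a4},~\eqref{a6} into bounds valid simultaneously for all $t$ in a compact set, and the mean value representation of $\varphi_t^{-1}(X)$ above is the mechanism that makes this work, since it trades control of $\varphi_t^{-1}$ for control of its $t$-derivative, which is precisely what~\eqref{a4} provides.
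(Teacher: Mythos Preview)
Your proposal is correct and follows essentially the same route as the paper: differentiate under the integral sign, use the mean value theorem to bound $|\varepsilon-\varphi_t^{-1}(X)|$ by $C_B\sup_{s\in B}|\partial\varphi_s^{-1}\circ\varphi_\theta(\varepsilon)|$, then combine the $L^2$ bound from~\eqref{a6} with the $L^2$ bound from~\eqref{a4} via Cauchy--Schwarz to get an integrable dominating function. One small simplification: your integral representation already gives $|\varepsilon-\varphi_t^{-1}(X)|\le |B|\sup_{s\in B}|\partial\varphi_s^{-1}(X)|$ directly, so the detour through $\varepsilon\in L^2$ is unnecessary (though harmless).
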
 
Then we can compute the second differential of $M^{\prime\prime}$ of $M$  for all $t\in{\Theta}$ as
\begin{align}
\label{defMsecond}
M^{\prime\prime}(t)=& 2\int_{0}^{1}\left[ \partial\varphi_{t}^{-1}\circ\varphi_{\theta}\circ F^{-1}(x)\right]^2dx \\ \notag &- 2\int_{0}^{1}\partial^2\varphi_{t}^{-1}\circ\varphi_{\theta}\circ F^{-1}(x)\left(F^{-1}(x)-\varphi_{t}^{-1}\circ\varphi_{\theta}\circ F^{-1}(x)\right) dx
\end{align}
that is 

\begin{align}
M^{\prime\prime}(t)=& 2\E\left[ \left( \partial\varphi_{t}^{-1}(X)\right)^2 \right]  - 2\E\left[\partial^2\varphi_{t}^{-1}(X)\left(\varepsilon-\varphi_{t}^{-1}(X)\right)\right] .
\end{align}
For the sake of clarity, we shall make use of $\gamma_n=1/n$ for the following theorem.
\begin{thm}
\label{TLC}
Assume \eqref{a1} to \eqref{a6}, with $\theta\in{]a;b[}$ where $a<b$. In addition, suppose that $M^{\prime\prime}(\theta)>1/2$ and that there exists $\alpha>4$ such that for all compact $B$ in $\Theta$,
$$
\mathbb{E}\left[\underset{t\in{B}}\sup|\partial\varphi_{t}^{-1}\circ\varphi_{\theta}(\varepsilon)|^{\alpha}\right]<+\infty.
$$
Then, we have as $n$ goes to infinity, the degenerated asymptotic normality
\begin{equation}
\label{dtlc}
\sqrt{n}\left(\wh{\theta}_{n}-\theta\right)\liml \delta_{0}.
\end{equation}
Moreover, if for all $t\in{[a;b]}$, 
\begin{equation}
\label{a7}
\tag{A7}
M^{\prime\prime}(t)\geq1/2,
\end{equation}
then for all $n\geq0$,
\begin{equation}
\label{mean2theta}
\mathbb{E}\left[\left(\wh{\theta}_{n}-\theta\right)^{2}\right]\leq{\left(\wh{\theta}_{0}-\theta\right)^{2}\frac{\exp\left(C_{1}\pi^{2}/6\right)}{n+1}}
\end{equation}
where 
\begin{equation}
\label{defC1}
C_{1}=4\mathbb{E}\left[\underset{t\in{[a;b]}}\sup|\partial\varphi_{t}^{-1}\circ\varphi_{\theta}(\varepsilon)|^{4}\right].
\end{equation}
\end{thm}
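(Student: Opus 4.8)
The plan is to establish the two assertions of Theorem~\ref{TLC} separately, both built on the same elementary device. By Lemma~\ref{MC2} a mean value expansion of $M'$ around $\theta$ is available, and by \eqref{DefT} together with $X_{n+1}=\varphi_{\theta}(\varepsilon_{n+1})$ the increment $T_{n+1}$ vanishes identically whenever $\wh{\theta}_n=\theta$, since then $\varphi_{\theta}^{-1}(X_{n+1})=\varepsilon_{n+1}$. Quantitatively, a mean value expansion in the variable $t$ yields $\varepsilon_{n+1}-\varphi_{t}^{-1}(X_{n+1})=-(t-\theta)\,\partial\varphi_{\tau}^{-1}\circ\varphi_{\theta}(\varepsilon_{n+1})$ for some $\tau$ between $t$ and $\theta$, whence $|T_{n+1}|\le 2\,|\wh{\theta}_n-\theta|\,\sup_{t\in{[a;b]}}|\partial\varphi_{t}^{-1}\circ\varphi_{\theta}(\varepsilon_{n+1})|^2$, and in particular $\dE[T_{n+1}^2\,|\,\cF_n]\le C_1(\wh{\theta}_n-\theta)^2$ with $C_1$ exactly the constant of \eqref{defC1}; the same bound controls $\dE[|T_{n+1}|^{\alpha/2}\,|\,\cF_n]$ through the $\alpha$-th moment hypothesis. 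This single estimate both forces the limiting noise of the algorithm to be zero and closes the $L^2$ recursion.

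For the degenerate central limit theorem, I would start from Theorem~\ref{ASCV}: since $\wh{\theta}_n\to\theta$ almost surely and $\theta\in{]a;b[}$, the projection $\pi_{[a;b]}$ in \eqref{RMA} is almost surely inactive from some random rank on, so eventually $\wh{\theta}_{n+1}=\wh{\theta}_n-\gamma_{n+1}T_{n+1}$ with $\gamma_n=1/n$. Writing $T_{n+1}=M'(\wh{\theta}_n)+\Delta_{n+1}$ with $(\Delta_n)$ a martingale increment sequence for $(\cF_n)$, and linearising $M'(\wh{\theta}_n)=(M''(\theta)+o(1))(\wh{\theta}_n-\theta)$ (using $M'(\theta)=0$ and the continuity of $M''$), one obtains a classical Robbins--Monro recursion with step $1/n$ and Hessian $M''(\theta)>1/2$. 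I would then invoke the standard asymptotic normality theorem for such algorithms (see \cite{Duflo97} and the martingale central limit theorem of \cite{HallHeyde80}): the normalisation is $\sqrt{n}$ and the limit law is the centred Gaussian with variance $\Gamma/(2M''(\theta)-1)$, where $\Gamma$ is the almost sure limit of $\dE[\Delta_{n+1}^2\,|\,\cF_n]$. The crucial point is that $\Gamma=0$: indeed $\dE[\Delta_{n+1}^2\,|\,\cF_n]\le\dE[T_{n+1}^2\,|\,\cF_n]\le C_1(\wh{\theta}_n-\theta)^2\to 0$ almost surely, $C_1$ being finite by \eqref{a4}. The conditional Lindeberg condition follows from $\sup_n\dE[|\Delta_{n+1}|^{\alpha/2}\,|\,\cF_n]<\infty$ with $\alpha/2>2$, again from the pointwise bound on $|T_{n+1}|$ and the $\alpha$-th moment assumption. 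Thus $\sqrt{n}(\wh{\theta}_n-\theta)\liml\delta_0$, which is \eqref{dtlc}.

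For the non-asymptotic bound \eqref{mean2theta} I would argue directly, under the extra hypothesis \eqref{a7}. Since $\theta\in{[a;b]}$ and the metric projection $\pi_{[a;b]}$ is $1$-Lipschitz, \eqref{RMA} gives $(\wh{\theta}_{n+1}-\theta)^2\le(\wh{\theta}_n-\theta-\gamma_{n+1}T_{n+1})^2$; expanding the square and taking conditional expectation given $\cF_n$, with $\dE[T_{n+1}\,|\,\cF_n]=M'(\wh{\theta}_n)$,
\[
\dE[(\wh{\theta}_{n+1}-\theta)^2\,|\,\cF_n]\le(\wh{\theta}_n-\theta)^2-2\gamma_{n+1}(\wh{\theta}_n-\theta)M'(\wh{\theta}_n)+\gamma_{n+1}^2\,\dE[T_{n+1}^2\,|\,\cF_n].
\]
A mean value expansion of $M'$ together with \eqref{a7} bounds the cross term from below by $\gamma_{n+1}(\wh{\theta}_n-\theta)^2$, since $(\wh{\theta}_n-\theta)M'(\wh{\theta}_n)=M''(\sigma_n)(\wh{\theta}_n-\theta)^2\ge\frac12(\wh{\theta}_n-\theta)^2$ for some $\sigma_n\in{[a;b]}$, while the estimate of the first paragraph gives $\dE[T_{n+1}^2\,|\,\cF_n]\le C_1(\wh{\theta}_n-\theta)^2$. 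Setting $v_n=\dE[(\wh{\theta}_n-\theta)^2]$ and taking total expectations, $v_{n+1}\le v_n(1-\gamma_{n+1}+C_1\gamma_{n+1}^2)$ for every $n\ge0$. Iterating with $\gamma_n=1/n$, and using $1+x\le e^x$, $\sum_{k=1}^n k^{-1}\ge\log(n+1)$ and $\sum_{k\ge1}k^{-2}=\pi^2/6$, one gets $v_n\le v_0\exp(-\sum_{k=1}^n k^{-1}+C_1\sum_{k=1}^n k^{-2})\le v_0\exp(C_1\pi^2/6)/(n+1)$, which is \eqref{mean2theta}.

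I expect the main difficulty to be organisational rather than conceptual. One has to check carefully that the generic Robbins--Monro normality theorem applies in the projected setting — controlling the finitely many steps before the projection switches off and the remainder $o(1)$ in the linearisation — and to verify the conditional moment and Lindeberg conditions from \eqref{a4} and the $\alpha>4$ hypothesis. Once the bound $|T_{n+1}|=O(|\wh{\theta}_n-\theta|)$ is in place, the degeneracy of the limit law is immediate (its conditional variance being zero), and the same bound delivers the clean product estimate behind \eqref{mean2theta}.
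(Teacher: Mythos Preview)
Your proposal is correct and follows essentially the same route as the paper. The non-asymptotic bound \eqref{mean2theta} is proved identically: the paper derives the very same recursion $v_{n+1}\le v_n(1+C_1\gamma_{n+1}^2-\gamma_{n+1})$ from the $1$-Lipschitz property of $\pi_{[a;b]}$, the identity $\dE[T_{n+1}\mid\cF_n]=M'(\wh{\theta}_n)$, the bound $\dE[T_{n+1}^2\mid\cF_n]\le C_1(\wh{\theta}_n-\theta)^2$, and the Taylor expansion $M'(\wh{\theta}_n)=(\wh{\theta}_n-\theta)\int_0^1 M''(\theta+x(\wh{\theta}_n-\theta))\,dx$ combined with \eqref{a7}, then iterates exactly as you do.

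The only noteworthy difference is in the packaging of the degenerate CLT. You argue that the projection is eventually inactive and then appeal to a generic Robbins--Monro normality theorem (\cite{Duflo97}, \cite{HallHeyde80}), checking the Lindeberg condition via the $\alpha>4$ moment hypothesis. The paper instead invokes Theorem~2.1 of Kushner and Yin \cite{KushnerYin03}, which is stated directly for constrained algorithms, together with their Theorem~4.1 for tightness of $\sqrt{n}(\wh{\theta}_n-\theta)$; the projection is thus absorbed into the cited result rather than argued away. Both routes rest on the same decisive observation $\dE[T_{n+1}^2\mid\cF_n]\le C_1(\wh{\theta}_n-\theta)^2\to 0$, which forces the limiting variance to vanish.
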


\begin{proof}
The proofs are postponed to Section 6.
\end{proof}

\begin{rem}
One can observe that $$M^{\prime\prime}(\theta)=2\int_{0}^{1}\left[ \partial\varphi_{\theta}^{-1}\circ\varphi_{\theta}\circ F^{-1}(x)\right]^2dx =2\E\left[ \left( \partial\varphi_{\theta}^{-1}(X)\right)^2 \right].$$
Hence the inequality $M^{\prime\prime}(\theta)>0$ holds in the general case. Moreover, replacing $M$ by $\lambda M$ where $\lambda$ is a real and positive number does not change any results. Then, the condition $M^{\prime\prime}(t)\geq1/2$ may be verified with little modifications. 
\end{rem}

\begin{rem}
From a theoretical point of view, it could be interesting to obtain a non-degenerated asymptotic normality than the one obtained in \eqref{dtlc}. For that purpose, one consider a slight modification of the algorithm defined by \eqref{RMA}. More precisely, it consists in replacing the algorithm \eqref{RMA} by its \og{excited}\fg\hspace{1mm}version
\begin{equation}
\label{RMAmodif}
\widetilde{\theta}_{n+1}=\pi_{[a;b]}\Bigl(\widetilde{\theta}_{n}-\gamma_{n+1}\widetilde{T}_{n+1}\Bigr)
\end{equation}
where the initial deterministic value $\widetilde{\theta}_{0} \in [a;b]$ and the random variable $\widetilde{T}_{n+1}$ is defined by
\begin{equation}
\label{DefTmodif}
\widetilde{T}_{n+1}=-2\partial\varphi_{\widetilde{\theta}_{n}}^{-1}\left(X_{n+1}\right)\left(\varepsilon_{n+1}-\varphi_{\widetilde{\theta}_{n}}^{-1}\left(X_{n+1}\right)\right)+V_{n+1}
\end{equation}
where $\left(V_{n}\right)$ is a sequence of independent and identically distributed simulated random variables with mean $0$ and variance $\sigma^{2}>0$. Then, thanks to this persistent excitation, Theorem \ref{ASCV} and Theorem \ref{TLC} are still true for $\widetilde{\theta}_{n}$ where \eqref{dtlc} is replaced by
\begin{equation}
\label{ndtlc}
\sqrt{n}\left(\widetilde{\theta}_{n}-\theta\right)\liml \mathcal{N}\left(0,\frac{\sigma^{2}}{2M^{\prime\prime}(\theta)-1}\right).
\end{equation}
\end{rem}


\section{ESTIMATION OF THE DENSITY}


In this section, we suppose that the random variable $\varepsilon$ has a density $f$ and we focus on the non-parametric estimation of this density. 
A natural way to estimate $f$ is to consider the recursive Parzen-Rosenblatt estimator defined for all $x\in{I_{1}}$, by
\begin{equation}
\label{PRstandard}
\widetilde{f}_{n}(x)=\frac{1}{n} \sum_{i=1}^n \frac{1}{h_i}K \left( \frac{x-\varepsilon_{i}}{h_i} \right).
\end{equation}
where $K$ is a standard kernel function.
It is well known that $\widetilde{f}_{n}$ is a really good approximation of $f$ for large values of $n$. However, for small samples corresponding to small values of $n$, $\widetilde{f}_{n}$ may not be a good estimator of  $f$. Hence, it could be interesting to have more realizations of $\varepsilon$ in order to get a better approximation. In our case, we know that $Z_{n}(\theta)=\varepsilon_{n}$. Then, the idea is to use the prior estimation of $\theta$ in order to construct a Parzen-Rosenblatt estimator of $f$ which will be of length $2n$.
Further assumptions must to be added to hypothesis \eqref{a1} to \eqref{a6}. More precisely, if we denote by $\partial$ the differential operator with respect to $t$ and $d$ the differential operator with respect to $x$, we need the following hypothesis on the regularity of $f$ and on the deformation functions $\varphi_{t}$.
\begin{itemize}
\item[ ]
\begin{align}
\label{ad1}  \tag{AD1}
&f\text{ is bounded, twice continuously differentiable on }I_{1},\\
\notag
&\text{with bounded derivatives.}
 \end{align}
 
\item[ ] \begin{equation}
\label{ad2}\tag{AD2} 
\text{For all } t\in{\Theta}, \varphi_t  \text{ is three times continuously differentiable on }  I_{1}.
 \end{equation}

\item[ ] \begin{align}
\label{ad3} \tag{AD3} 
&\varphi^{-1}_{\theta}  \text{ is three times continuously differentiable on } I_{2},\\
\notag
&\text{with bounded derivatives.}
\end{align}

\item[ ]  \begin{equation}
\label{ad4} \tag{AD4} 
d\varphi, d^2\varphi,  d^3\varphi  \text{ are bounded}.
\end{equation}
\end{itemize}

Denote by $K$ a positive kernel which is a symmetric, integrable and bounded function, such that 
$$
\int_\R K(u)du =1,\hspace{5mm}\textnormal{}\hspace{5mm}\lim_{\left\vert x\right\vert \rightarrow +\infty } \left\vert x \right\vert K(x) = 0,\hspace{5mm}\textnormal{and}\hspace{5mm}\int_\R u^2K(u)du <+\infty.
$$
Then we consider the following recursive estimate
\begin{equation}
\label{PRestimate}
\widehat{f}_n(x) =\frac{1}{n} \sum_{i=1}^n \frac{1}{h_i}K \left( \frac{x-{Z}_i(\widehat{\theta}_{i-1})}{h_i} \right),
\end{equation}
where $\widehat{\theta}_{i-1}$ is given by \eqref{RMA} and where the bandwidth $\left( h_n\right)$ is a sequence of positive real numbers, decreasing to zero, such that  $n h_n$ tends to infinity when $n$ goes to infinity. For sake of simplicity, we make use of $h_n=\frac{1}{n^ \alpha}$ with $0<\alpha<1$.
The following result deals with the pointwise almost sure convergence of $\widehat{f}_n(x)$.
\begin{thm}
\label{th:cvps}
Assume \eqref{a1} to \eqref{a5} with  $\theta\in{]a;b[}$ where $a<b$ and \eqref{ad1} to \eqref{ad4}. Then for all $x\in I_{1}$, 
\begin{equation}
\label{ASCVdensity}
\widehat{f}_n(x)\xrightarrow{n \rightarrow \infty} f(x) \hspace{6mm}\textnormal{ a.s.}
\end{equation}
\end{thm}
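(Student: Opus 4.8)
The plan is to run the classical martingale analysis of the recursive Parzen--Rosenblatt estimator, the only new feature being that each summand now depends on the running estimate $\widehat{\theta}_{i-1}$ through the \emph{conditional} law of $Z_i(\widehat{\theta}_{i-1})$. Fix $x\in I_{1}$ and let $\mathcal{F}_n=\sigma(\varepsilon_1,\dots,\varepsilon_n)$; by \eqref{RMA} and \eqref{DefT}, $\widehat{\theta}_{i-1}$ is $\mathcal{F}_{i-1}$-measurable while $\varepsilon_i$ is independent of $\mathcal{F}_{i-1}$. Write $W_i(x)=h_i^{-1}K\bigl((x-Z_i(\widehat{\theta}_{i-1}))/h_i\bigr)$, so that $\widehat{f}_n(x)=\tfrac1n\sum_{i=1}^n W_i(x)$. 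The first observation is that, conditionally on $\mathcal{F}_{i-1}$, the variable $Z_i(\widehat{\theta}_{i-1})=\varphi_{\widehat{\theta}_{i-1}}^{-1}\circ\varphi_{\theta}(\varepsilon_i)$ is distributed as $Z(t)$ at $t=\widehat{\theta}_{i-1}$, whose distribution function is $F\circ\varphi_{\theta}^{-1}\circ\varphi_t$; under \eqref{a1}--\eqref{a2} and \eqref{ad1}--\eqref{ad4} this law has density $g_t(z)=f\bigl(\varphi_{\theta}^{-1}(\varphi_t(z))\bigr)\,(\varphi_{\theta}^{-1})'(\varphi_t(z))\,d\varphi_t(z)$, with $g_{\theta}=f$, with $(t,z)\mapsto g_t(z)$ continuous, and with $M:=\sup_{t\in[a;b]}\|g_t\|_{\infty}\le\|f\|_{\infty}\,\|(\varphi_{\theta}^{-1})'\|_{\infty}\,\|d\varphi\|_{\infty}<+\infty$.

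Next I would split $\widehat{f}_n(x)=A_n(x)+B_n(x)$, where $A_n(x)=\tfrac1n\sum_{i=1}^n\E[W_i(x)\mid\mathcal{F}_{i-1}]$ is the (random) bias term and $B_n(x)=\tfrac1n\sum_{i=1}^n\bigl(W_i(x)-\E[W_i(x)\mid\mathcal{F}_{i-1}]\bigr)$ the centred one. By the first step and the change of variable $z=x-h_i u$, $\E[W_i(x)\mid\mathcal{F}_{i-1}]=\int_{\R}K(u)\,g_{\widehat{\theta}_{i-1}}(x-h_i u)\,du$. On the almost sure event $\{\widehat{\theta}_n\to\theta\}$ furnished by Theorem~\ref{ASCV}, and since $h_i\to0$, continuity of $g$ gives $g_{\widehat{\theta}_{i-1}}(x-h_i u)\to g_{\theta}(x)=f(x)$ for every $u$; as $g_{\widehat{\theta}_{i-1}}(x-h_i u)K(u)\le M\,K(u)\in L^{1}(\R)$ and $\int K=1$, dominated convergence gives $\E[W_i(x)\mid\mathcal{F}_{i-1}]\to f(x)$ a.s., whence $A_n(x)\to f(x)$ a.s.\ by Ces\`aro averaging.

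For the centred term I would set $D_i=W_i(x)-\E[W_i(x)\mid\mathcal{F}_{i-1}]$ and note that $N_n:=\sum_{i=1}^n D_i/i$ is an $(\mathcal{F}_n)$-martingale; the same substitution together with $\int K^{2}\le\|K\|_{\infty}\int K=\|K\|_{\infty}$ gives $\E[D_i^{2}\mid\mathcal{F}_{i-1}]\le\E[W_i(x)^{2}\mid\mathcal{F}_{i-1}]=h_i^{-1}\int_{\R}K(u)^{2}g_{\widehat{\theta}_{i-1}}(x-h_i u)\,du\le M\|K\|_{\infty}/h_i$. With $h_i=i^{-\alpha}$, $0<\alpha<1$, this is the deterministic bound $\sum_{i\ge1}i^{-2}\,\E[D_i^{2}\mid\mathcal{F}_{i-1}]\le M\|K\|_{\infty}\sum_{i\ge1}i^{\alpha-2}<+\infty$, so $N_n$ converges almost surely and Kronecker's lemma gives $B_n(x)=\tfrac1n\sum_{i=1}^n D_i\to0$ a.s. Adding the two pieces then yields $\widehat{f}_n(x)\to f(x)$ a.s.

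The one genuine difficulty is the control of $A_n$: it rests on the joint continuity of $(t,z)\mapsto g_t(z)$ at $(\theta,x)$ and on its boundedness uniformly over $t\in[a;b]$, which is exactly where \eqref{ad1}--\eqref{ad4} (boundedness of $f$ and of the relevant derivatives of $\varphi_t$ and $\varphi_{\theta}^{-1}$) come in, and one also has to check that the interchange of limit and integral survives the fact that $x-h_i u$ may leave $I_{1}$ (harmless once $g_t$ is extended by $0$ off the support). I expect deriving this regularity of the pushforward density $g_t$ from \eqref{a1}--\eqref{a2} and \eqref{ad1}--\eqref{ad4} to be the main obstacle; the rest is the standard bias/variance bookkeeping for recursive kernel estimators, with the condition $\alpha<1$ entering precisely so that $\sum_i i^{\alpha-2}$ converges.
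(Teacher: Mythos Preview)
Your proof is correct and uses the same bias/martingale decomposition as the paper, but each piece is handled in a more elementary way. For the bias term $A_n$, the paper Taylor-expands $f^t(x-h_iv)$ to second order, exploits the parity of $K$ to kill the first-order term, and obtains the quantitative bound $\bigl|\E[W_i(x)\mid\mathcal{F}_{i-1}]-f^{\widehat{\theta}_{i-1}}(x)\bigr|\le C_2h_i^{2}$ before combining with $f^{\widehat{\theta}_{i-1}}(x)\to f(x)$ and Ces\`aro; you instead appeal directly to dominated convergence via the uniform bound $g_t\le M$, which bypasses the second-derivative machinery (and hence does not use the symmetry of $K$ or the finiteness of $\int u^{2}K(u)\,du$ here). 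For the martingale term, the paper pins down the exact growth $\langle N(x)\rangle_n\sim\frac{\nu^{2}}{\alpha+1}f(x)\,n^{1+\alpha}$ and then invokes the strong law for martingales (Duflo, Theorem~1.3.15); you take the cruder route $\E[D_i^{2}\mid\mathcal{F}_{i-1}]\le M\|K\|_{\infty}\,i^{\alpha}$, so that $\sum_i i^{-2}\E[D_i^{2}\mid\mathcal{F}_{i-1}]<\infty$ and Kronecker's lemma finishes. Your variant is shorter and uses less of \eqref{ad1}--\eqref{ad4} for the pointwise a.s.\ statement, while the paper's sharper estimates (the $O(h_i^{2})$ bias bound and the precise asymptotic of $\langle N(x)\rangle_n$) are the ones that would be reused for rates or a CLT.
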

It follows from Theorem \ref{th:cvps} that for small values of $n$, the averaged estimator 
$$
\bar{f}_{n}=\frac{1}{2}\left(\widetilde{f}_{n}+\widehat{f}_{n}\right)
$$
where $\widetilde{f}_{n}$ and $\widehat{f}_{n}$ are given by \eqref{PRstandard} and \eqref{PRestimate}, will perform better than $\widetilde{f}_{n}$ or $\widehat{f}_{n}$.\\
\\
The second result of this section concerns the convergence in quadratic mean of $\widehat{f}_n(x)$ to $f(x)$. In this way, we need to add to hypothesis \eqref{ad1} to \eqref{ad4} the following little stronger assumption on the regularity of the deformation functions $\varphi$.
\begin{itemize}
\item[ ] \begin{align}
\label{ad5}\tag{AD5} 
&\varphi \text{ is twice continuously differentiable on }\Theta \times I_{1}\\
 \notag 
&\text{and } \partial \varphi_t(x), \partial d\varphi_t(x)  \text{ are bounded with respect to }t. \end{align}
\end{itemize}
\begin{thm}
\label{th:cvl2}
Assume \eqref{a1} to \eqref{a7} with $\theta\in{]a;b[}$ where $a<b$ and \eqref{ad1} to \eqref{ad5}. Then, for all $x\in I_{1}$,
\begin{equation}
\label{meanCVdensity}
\E \left[ \left\vert \widehat{f}_n(x)-f(x)\right\vert^2 \right] \xrightarrow{n \rightarrow \infty} 0.
\end{equation}
\end{thm}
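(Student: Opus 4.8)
The plan is to bound the quadratic risk $\E[|\wh f_n(x)-f(x)|^2]$ by the usual bias–variance decomposition, but carried out around the \emph{oracle} recursive estimator $\wt g_n(x)=\frac1n\sum_{i=1}^n \frac1{h_i}K((x-\varepsilon_i)/h_i)$ built from the true $\varepsilon_i=Z_i(\theta)$. Write $\wh f_n(x)-f(x) = (\wh f_n(x)-\E\wt g_n(x)) + (\E\wt g_n(x)-f(x))$, and further split the first term as $(\wh f_n(x)-\wt g_n(x)) + (\wt g_n(x)-\E\wt g_n(x))$. The last two pieces are classical: under \eqref{ad1} and the kernel assumptions, $\E\wt g_n(x)-f(x)=O(n^{-1}\sum_i h_i^2)$ by a Taylor expansion of $f$, which tends to $0$ since $h_n\to0$; and $\mathrm{Var}(\wt g_n(x)) = \frac1{n^2}\sum_i \mathrm{Var}(h_i^{-1}K((x-\varepsilon_i)/h_i)) = O(n^{-2}\sum_i h_i^{-1}) = O((nh_n)^{-1})\to0$ because $f$ is bounded and $nh_n\to\infty$ (with $h_n=n^{-\alpha}$, $0<\alpha<1$, both rates are explicit). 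So the entire burden is controlling $\E[(\wh f_n(x)-\wt g_n(x))^2]$, i.e.\ the error incurred by replacing $\theta$ with $\wh\theta_{i-1}$ inside the kernel.

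For that term, fix $i$ and set $D_i = \frac1{h_i}\big(K((x-Z_i(\wh\theta_{i-1}))/h_i) - K((x-\varepsilon_i)/h_i)\big)$, so that $\wh f_n(x)-\wt g_n(x)=\frac1n\sum_{i=1}^n D_i$ and $\E[(\wh f_n-\wt g_n)^2]\le \frac1n\sum_{i=1}^n \E[D_i^2] + \text{cross terms}$; in fact it is cleaner to bound $\E[(\frac1n\sum D_i)^2] \le \big(\frac1n\sum_i (\E D_i^2)^{1/2}\big)^2$ by Minkowski. To estimate $\E[D_i^2]$, I would use that $Z_i(\wh\theta_{i-1})-\varepsilon_i = Z_i(\wh\theta_{i-1})-Z_i(\theta)$, which by \eqref{ad5} (boundedness of $\partial\varphi_t$, $\partial d\varphi_t$, hence of $\partial(\varphi_t^{-1})$ uniformly on $[a;b]$, together with \eqref{ad3}) satisfies $|Z_i(\wh\theta_{i-1})-\varepsilon_i| \le C|\wh\theta_{i-1}-\theta|$ pointwise. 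Splitting on whether $h_i$ is large or small relative to this increment, and using that $K$ is bounded (so $|D_i|\le 2\|K\|_\infty/h_i$ always) while also $K$ has the decay $|u|K(u)\to0$ and $\int u^2 K<\infty$, one gets a bound of the shape $\E[D_i^2] \le \frac{C}{h_i^{3}}\,\E[(\wh\theta_{i-1}-\theta)^2]$ (a Lipschitz-type estimate on $K$ in the worst case; a more careful argument using the modulus of continuity of $K$ can improve the power of $h_i$, but a crude power suffices). Here is where \eqref{a7} enters: Theorem \ref{TLC} gives $\E[(\wh\theta_{i-1}-\theta)^2] \le C_0/(i)$ via \eqref{mean2theta}. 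Hence $(\E D_i^2)^{1/2} \le C\, h_i^{-3/2} i^{-1/2} = C\, i^{3\alpha/2 - 1/2}$, and $\frac1n\sum_{i=1}^n i^{3\alpha/2-1/2} = O(n^{3\alpha/2 - 1/2})$, so $\E[(\wh f_n(x)-\wt g_n(x))^2] = O(n^{3\alpha-1})$, which tends to $0$ provided $\alpha<1/3$; to cover the full range $0<\alpha<1$ claimed, one must replace the crude $h_i^{-3}$ by the sharper $h_i^{-1}$ obtainable from a dominated-convergence / change-of-variables argument on $\int (K(u - \delta_i/h_i)-K(u))^2\,du$ together with the boundedness of the density of $\varepsilon$ near $x$, which yields $\E[D_i^2]\le C h_i^{-1} \E[(\wh\theta_{i-1}-\theta)^2]\le C h_i^{-1} i^{-1}$ and thus $O(n^{-1}\sum_i i^{\alpha-1}) = O(n^{\alpha-1})\to 0$.

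The main obstacle is precisely this estimate of $\E[D_i^2]$: one needs to transfer the $L^2$-control on $\wh\theta_{i-1}-\theta$ through the kernel without losing too many powers of $h_i$, and this requires handling the fact that $\wh\theta_{i-1}$ is $\cF_{i-1}$-measurable while $\varepsilon_i$ (hence $Z_i(\cdot)$) is independent of $\cF_{i-1}$ — so one should condition on $\cF_{i-1}$, integrate in $\varepsilon_i$ first (using its bounded density to control $\int h_i^{-2}(K((x-Z_i(t))/h_i)-K((x-\varepsilon_i)/h_i))^2 f(\varepsilon)\,d\varepsilon$ uniformly in $t\in[a;b]$ by a substitution reducing it to $h_i^{-1}\int(K(u)-K(u-c/h_i))^2 du \le C h_i^{-1} \min(1, c^2/h_i^2)$), and only then take expectation over $\wh\theta_{i-1}$. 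Once the bound $\E[D_i^2]\le C h_i^{-1}\E[(\wh\theta_{i-1}-\theta)^2]$ is in hand, assembling the three pieces and invoking \eqref{mean2theta} finishes the proof; the remaining manipulations (bias Taylor expansion, variance sum, Minkowski assembly) are routine. The only place \eqref{ad5} is genuinely needed beyond \eqref{ad1}–\eqref{ad4} is the uniform Lipschitz control $t\mapsto Z_i(t)$ on the compact $[a;b]$, and the only place \eqref{a7} is needed is the $1/i$ rate from \eqref{mean2theta}.
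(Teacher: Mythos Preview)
Your oracle decomposition around $\wt g_n$ is a natural alternative to the paper's direct bias--variance split of $\wh f_n$, and the two ``classical'' pieces $\wt g_n-\E\wt g_n$ and $\E\wt g_n-f$ are indeed routine. The gap is in the remaining term $\wh f_n-\wt g_n=\frac1n\sum_i D_i$. Both your crude bound $\E[D_i^2]\le C h_i^{-3}\E[(\wh\theta_{i-1}-\theta)^2]$ and your sharper bound $\E[D_i^2]\le C h_i^{-1}\E[(\wh\theta_{i-1}-\theta)^2]$ rest on a Lipschitz-type estimate for $K$, namely $\int(K(u)-K(u-s))^2\,du=O(s^2)$. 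The paper assumes only that $K$ is bounded, symmetric, integrable with $\int K=1$, $|x|K(x)\to0$ and $\int u^2K<\infty$; the box kernel $K=\mathbf{1}_{[-1/2,1/2]}$ satisfies all of this yet gives $\|K(\cdot-s)-K\|_2^2=2|s|$, not $O(s^2)$, and a general admissible $K$ need have no useful modulus at all. Without such regularity the only uniform bound available is $\E[D_i^2]\le C/h_i$, which is not summable under Minkowski. (A minor additional slip: the shift $\varepsilon_i-Z_i(\wh\theta_{i-1})$ depends on $\varepsilon_i$, so the conditional integral is not a pure translation of $K$; this is fixable since the shift is uniformly $O(|\wh\theta_{i-1}-\theta|)$, but only once one has a pointwise modulus for $K$.)

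The paper circumvents the need for any smoothness of $K$ by never estimating the kernel difference pointwise. It expands the variance $V_n(x)=\E|\wh f_n-\E\wh f_n|^2$ into diagonal terms $n^{-2}\sum_i\E U_i^2$ and cross terms $2n^{-2}\sum_{i<j}\E[U_iU_j]$ with $U_i=W_i-\E W_i$. The $U_i$ are not independent, but conditioning on $\cF_{j-1}$ and integrating the kernel against the density of $Z_j(\wh\theta_{j-1})$ gives $\E[U_j\mid\cF_{j-1}]=f^{\wh\theta_{j-1}}(x)-\E[f^{\wh\theta_{j-1}}(x)]+O(h_j^2)$: the regularity burden has been transferred from $K$ to the map $t\mapsto f^t(x)$, and \emph{this} is what \eqref{ad5} supplies via $\sup_t|\partial f^t(x)|<\infty$, yielding $|f^{\wh\theta_{j-1}}(x)-f(x)|\le C|\wh\theta_{j-1}-\theta|$. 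Combining with Cauchy--Schwarz, $\E[U_i^2]\le C/h_i$, and the $L^2$ rate $\E[(\wh\theta_{j-1}-\theta)^2]\le C/j$ from \eqref{mean2theta} (this is where \eqref{a7} enters) gives $|\E[U_iU_j]|\le C\bigl((jh_i)^{-1/2}+h_j^2 h_i^{-1/2}\bigr)$, and the double sum is $o(n^2)$ for the full range $0<\alpha<1$. Your oracle route can in fact be salvaged by the same device --- expand $\E[(\frac1n\sum D_i)^2]$ and control $\E[D_iD_j]$ through $\E[D_j\mid\cF_{j-1}]=f^{\wh\theta_{j-1}}(x)-f(x)+O(h_j^2)$ --- but then the computation is essentially the paper's, and the Minkowski shortcut you propose does not go through without adding a smoothness hypothesis on $K$.
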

\begin{proof}
The proofs are postponed to Section 7.
\end{proof}


\section{SIMULATIONS}


This section is devoted to the numerical illustration of the asymptotic properties of our estimator $\widehat{\theta}_{n}$ defined by \eqref{RMA}. Note that for the model \eqref{model}, the transformations $\varphi_{\theta}$ which are inversible with respect to $\theta$ have no great interest because, in this case, it is possible to express $\theta$ in terms of $X_{0},\hdots,X_{n},\varepsilon_{0},\hdots,\varepsilon_{n}$. However, when $\varphi_{\theta}$ is not invertible with respect to $\theta$, it is not possible to use a direct expression for the estimator and 
our procedure is useful in order to estimate $\theta$. Among the many transformations of interest, we focus  here on two of them that are used in econometry. More precisely, we illustrate our estimation procedure for the Box-Cox transformation $\varphi_{t}^{1}$ and the arcsinh transformation $\varphi_{t}^{2}$. The transformation $\varphi_{t}^{1}$ is given, for all $x\in{\mathbb{R}^{+}_{*}}$, by
\begin{eqnarray}
\label{BoxCox}
   \varphi_{t}^{1}(x) = \left \{ \begin{array}{lll}
   \displaystyle\frac{x^{t}-1}{t} & \ \text{ if }& \ t\neq 0\vspace{1ex} \\
       \hspace{2mm}\log(x) & \ \text{ if } &\ t=0\\
   \end{array}\right.
\end{eqnarray}
whereas $\varphi_{t}^{2}$ is given for all $x\in{\mathbb{R}}$, by
\begin{eqnarray}
\label{arcshtransfo}
   \varphi_{t}^{2}(x) = \left \{ \begin{array}{lll}
   \displaystyle\frac{1}{t}\sinh^{-1}(t x) & \ \text{ if }& \ t\neq 0\vspace{1ex} \\
       \hspace{5mm}x & \ \text{ if } &\ t=0.\\
   \end{array}\right.
\end{eqnarray}
Throughout this section, we suppose that $\theta>0$, and specifically we assume that $\theta\in{]a;b[}$ with $a=1/10$ and $b=2$. Then, the Box-Cox transform $\varphi_{t}^{1}$ is invertible from $]1;+\infty[$ to $\mathbb{R}^{+}_{*}$ and the arcsinh transformation is invertible from $\mathbb{R}$ to $\mathbb{R}$. Moreover, the inverses $\left(\varphi_{t}^{1}\right)^{-1}$ and $\left(\varphi_{t}^{2}\right)^{-1}$ of $\varphi_{t}^{1}$ and $\varphi_{t}^{2}$ are given by
\begin{equation}
\label{inverseBoxCox}
\forall x\in{\mathbb{R}^{+}_{*}},\hspace{1cm}\left(\varphi_{t}^{1}\right)^{-1}(x)=\left(1+t x\right)^{1/t}
\end{equation}
and
\begin{equation}
\label{inversearcsh}
\forall x\in{\mathbb{R}},\hspace{1cm}\left(\varphi_{t}^{2}\right)^{-1}(x)=\frac{1}{t}\sinh(t x).
\end{equation}
Hence, it is clear that for all $t\in{[a;b]}$, $\left(\varphi_{t}^{1}\right)^{-1}(x)$ and $\left(\varphi_{t}^{2}\right)^{-1}(x)$ are continuously differentiable according to $t$ and that
\begin{equation}
\label{DinverseBoxCox}
\forall x\in{\mathbb{R}^{+}_{*}},\hspace{1cm}\partial\left(\varphi_{t}^{1}\right)^{-1}(x)=\frac{1}{t}\left(\frac{x}{1+t x}-\frac{1}{t}\log(1+t x)\right)\left(1+t x\right)^{1/t}
\end{equation}
and
\begin{equation}
\label{Dinversearcsh}
\forall x\in{\mathbb{R}},\hspace{1cm}\partial\left(\varphi_{t}^{2}\right)^{-1}(x)=-\frac{1}{t}\left(\frac{1}{t}\sinh(t x)-x\cosh(t x)\right).
\end{equation}
Denote by $M^{1}$, respectively $M^{2}$, the function $M$ given by \eqref{defM} associated with $\varphi_{t}^{1}$ and $\varphi_{t}^{2}$. For the simulations, we choose $\theta=1$. The functions $M^{1}$ and $M^{2}$ are represented in Figure \ref{functionM}. One can see that $\theta$ is effectively a global minimum of $M^{1}$ and $M^{2}$.
\begin{figure}[htb]
\vspace{-2ex}
\begin{minipage}[b] {0.48\linewidth}
\centering 
\centerline {\includegraphics[scale=0.45]{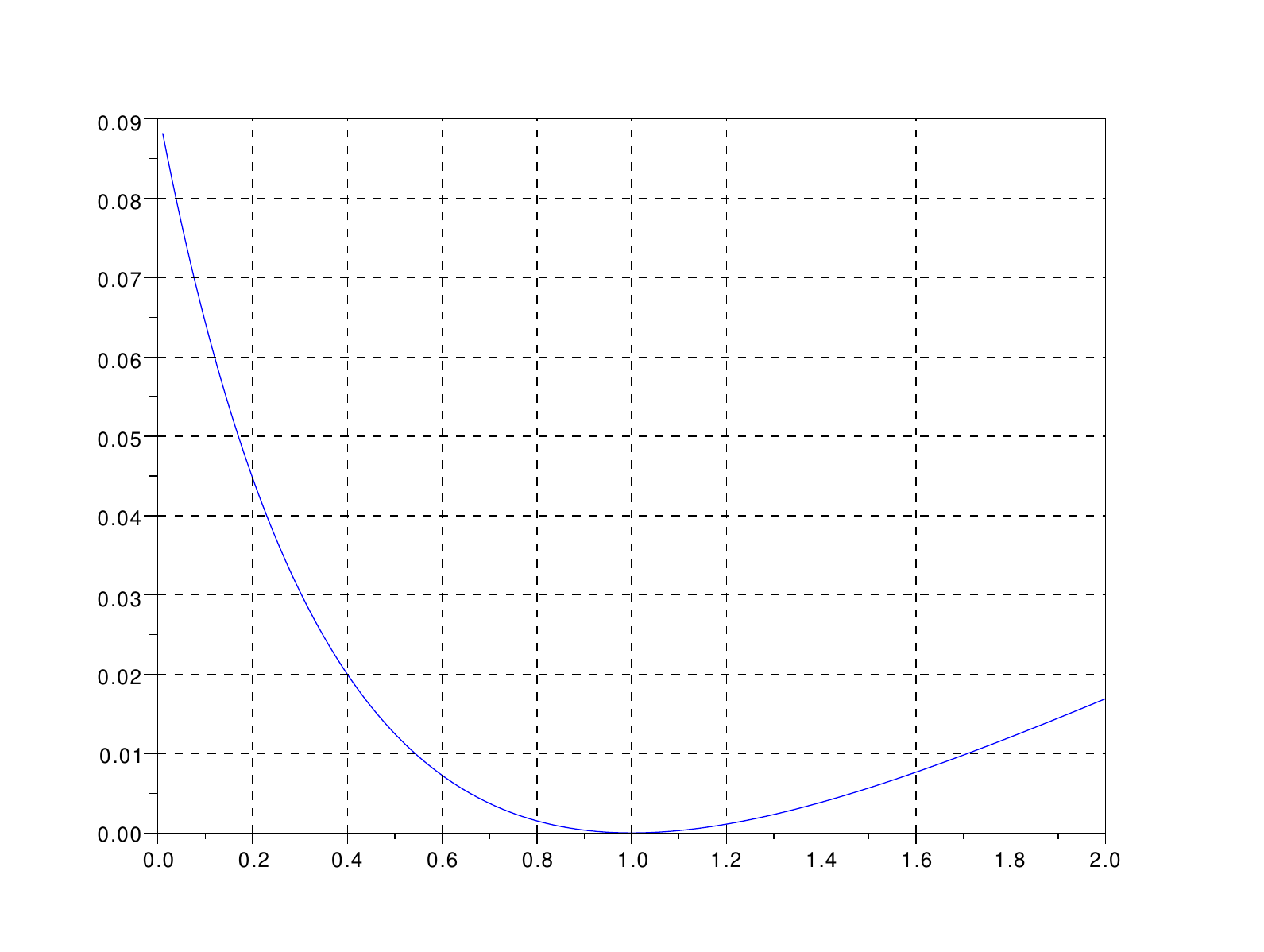} }
\vspace{0.1cm}
\medskip
\end{minipage}
\hfill
\begin{minipage}[b]{0.48\linewidth}
\centering 
\centerline {\includegraphics[scale=0.45]{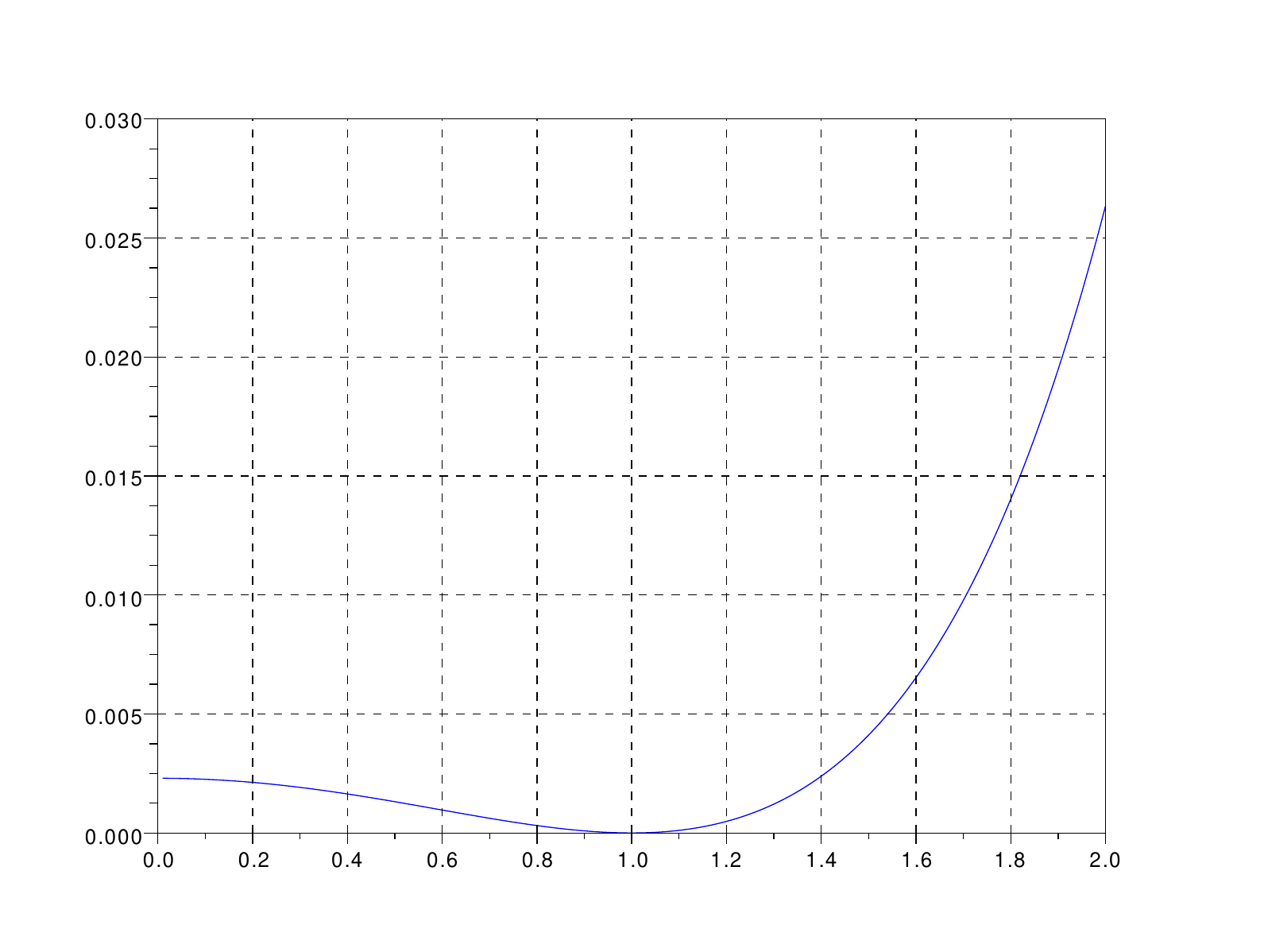} }
\vspace{0.1cm}
\medskip
\end{minipage}
\vspace{-2ex}
\caption{The functions $M^{1}$ and $M^{2}$}
\label{functionM}
\end{figure}
For the estimation of $\theta$ in both models, one chooses $\left(\varepsilon_{n}^{1}\right)$ a sequence of independent random variables whose distribution is uniform on $[0;1]$ and $\left(\varepsilon_{n}^{2}\right)$ a sequence of independent random variables whose distribution is uniform on $[1;2]$. We simulate random variables $X_{n}^{1}$ and $X_{n}^{2}$ according to the model \eqref{model}
$$
X_{n}^{i}=\varphi_{\theta}^{i}\left(\varepsilon_{n}^{i}\right),
$$ 
for $i=1,2$. Then, for $i=1,2$ and for the choice of step $\gamma_{n}=1/n$, we compute the sequence $\wh{\theta}_{n}^{i}$ according to \eqref{RMA}. More precisely,
$$
\wh{\theta}_{n+1}^{i}=\pi_{[a;b]}\left(\wh{\theta}_{n}^{i}-\gamma_{n}T_{n+1}^{i}\right)
$$
where 
$$
T_{n+1}^{i}=-2\partial\left(\varphi_{\wh{\theta}_{n}^{i}}^{i}\right)^{-1}(X_{n+1}^{i})\left(\varepsilon_{n+1}^{i}-\left(\varphi_{\wh{\theta}_{n}^{i}}^{i}\right)^{-1}(X_{n+1}^{i})\right),
$$
and where $(\varphi_{\wh{\theta}_{n}^{i}}^{i})^{-1}$ are given by \eqref{inverseBoxCox} and \eqref{inversearcsh} and $\partial(\varphi_{\wh{\theta}_{n}^{i}}^{i})^{-1}$ are given by \eqref{DinverseBoxCox} and \eqref{Dinversearcsh}. The values of $\wh{\theta}_{n}^{i}$ are computed until $n=1000$. We represent on the left-hand side (respectively on the right-hand side) of Figure \ref{cvps} the difference between $\wh{\theta}_{n}^{1}$ and $\theta$ (respectively $\wh{\theta}_{n}^{2}$ and $\theta$) for $1\leq{n}\leq{1000}$. In particular,  we obtain that $|\wh{\theta}_{1000}^{1}-\theta|=0.00239$ and $|\wh{\theta}_{1000}^{2}-\theta|=0.0042$ showing that our procedure performs very well for both models.
In addition, on the left-hand side of Figure \ref{cvloi}, one have represented the degenerated asymptotic normality given by \eqref{dtlc} for the data generated according to the model \eqref{model} associated with $\varphi_{\theta}^{1}$. For that, we have made $200$ realizations of the random variable $\sqrt{1000}\left(\widehat{\theta}_{1000}^{1}-\theta\right)$. Finally, one also consider the excited version \eqref{RMAmodif} of algorithm \eqref{RMA} for the first deformation $\varphi_{\theta}^{1}$
$$
\widetilde{\theta}_{n+1}^{1}=\pi_{[a;b]}\left(\widetilde{\theta}_{n}^{1}-\gamma_{n}\widetilde{T}_{n+1}^{1}\right),
$$
with
$$
\widetilde{T}_{n+1}^{1}=-2\partial\left(\varphi_{\wh{\theta}_{n}^{1}}^{1}\right)^{-1}(X_{n+1}^{1})\left(\varepsilon_{n+1}^{1}-\left(\varphi_{\wh{\theta}_{n}^{1}}^{1}\right)^{-1}(X_{n+1}^{1})\right)+V_{n+1},
$$
 where the sequence $(V_{n})$ is a sequence of independent random variables simulated according to the law $\mathcal{N}\left(0,1/2\right)$. As for the degenerated asymptotic normality, one have made $200$ realizations of the random variable $\sqrt{1000}\left(\widetilde{\theta}_{1000}^{1}-\theta\right)$ in order to illustrate the asymptotic normality given by \eqref{ndtlc}. This last numerical result is represented on the right-hand side of Figure \ref{cvloi}.
\begin{figure}[htb]
\vspace{-2ex}
\begin{minipage}[b] {0.48\linewidth}
\centering 
\centerline {\includegraphics[scale=0.45]{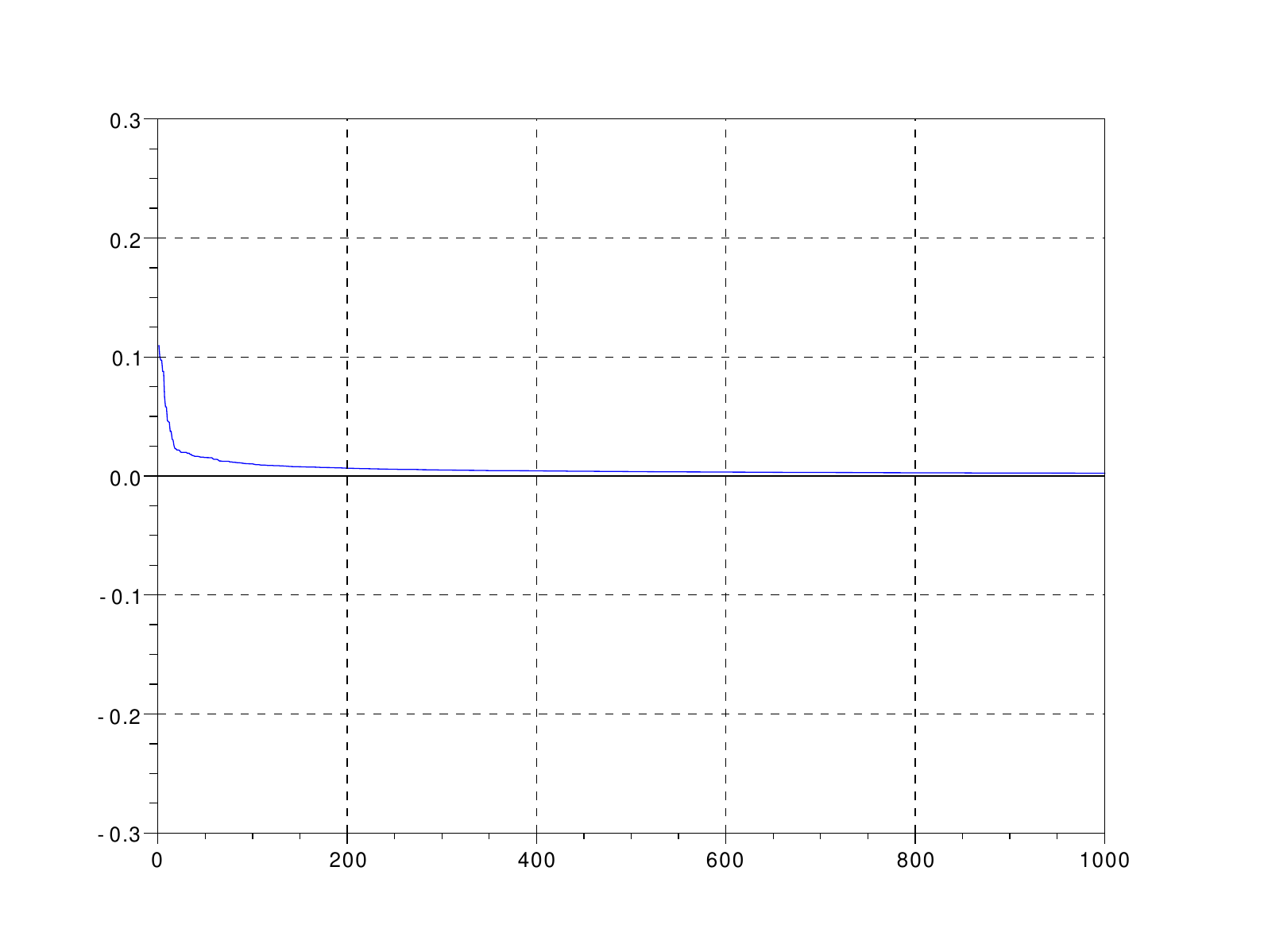} }
\vspace{0.1cm}
\medskip
\end{minipage}
\hfill
\begin{minipage}[b]{0.48\linewidth}
\centering 
\centerline {\includegraphics[scale=0.45]{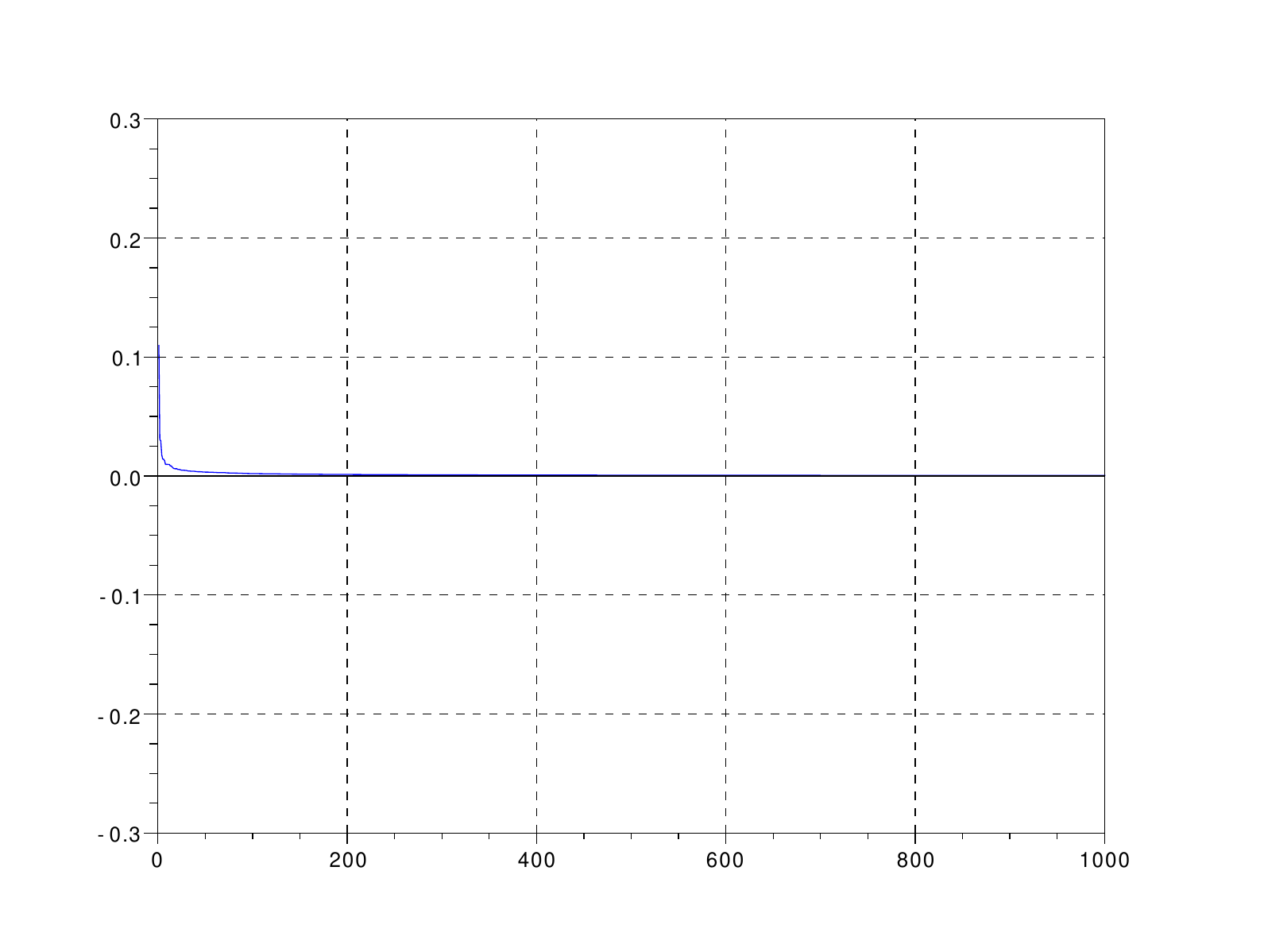} }
\vspace{0.1cm}
\medskip
\end{minipage}
\vspace{-2ex}
\caption{Difference $\wh{\theta}_{n}^{1}-\theta$ and $\wh{\theta}_{n}^{2}-\theta$.}
\label{cvps}
\end{figure}

\begin{figure}[htb]
\vspace{-2ex}
\begin{minipage}[b] {0.48\linewidth}
\centering 
\centerline {\includegraphics[scale=0.45]{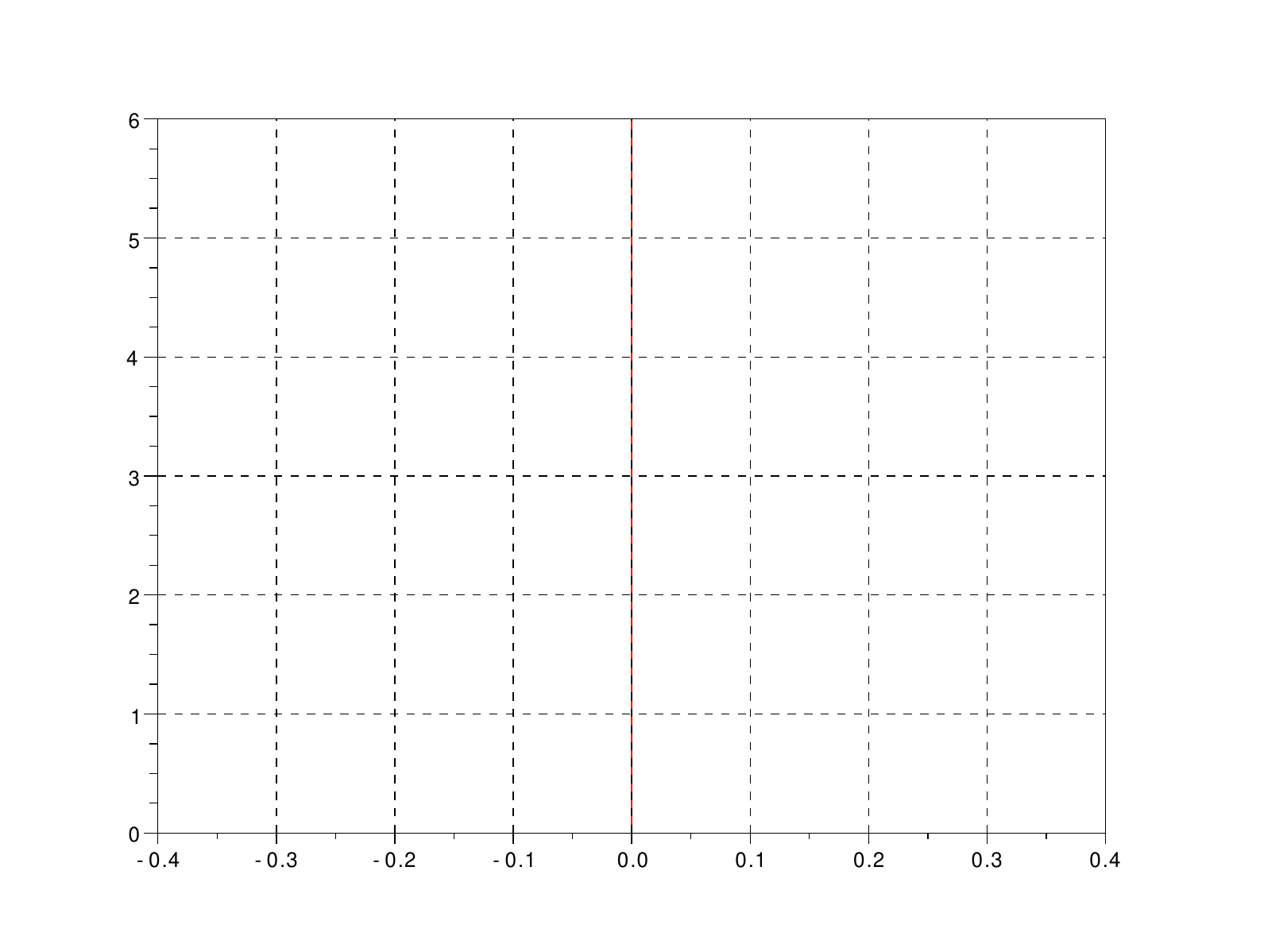} }
\vspace{0.1cm}
\medskip
\end{minipage}
\hfill
\begin{minipage}[b]{0.48\linewidth}
\centering 
\centerline {\includegraphics[scale=0.45]{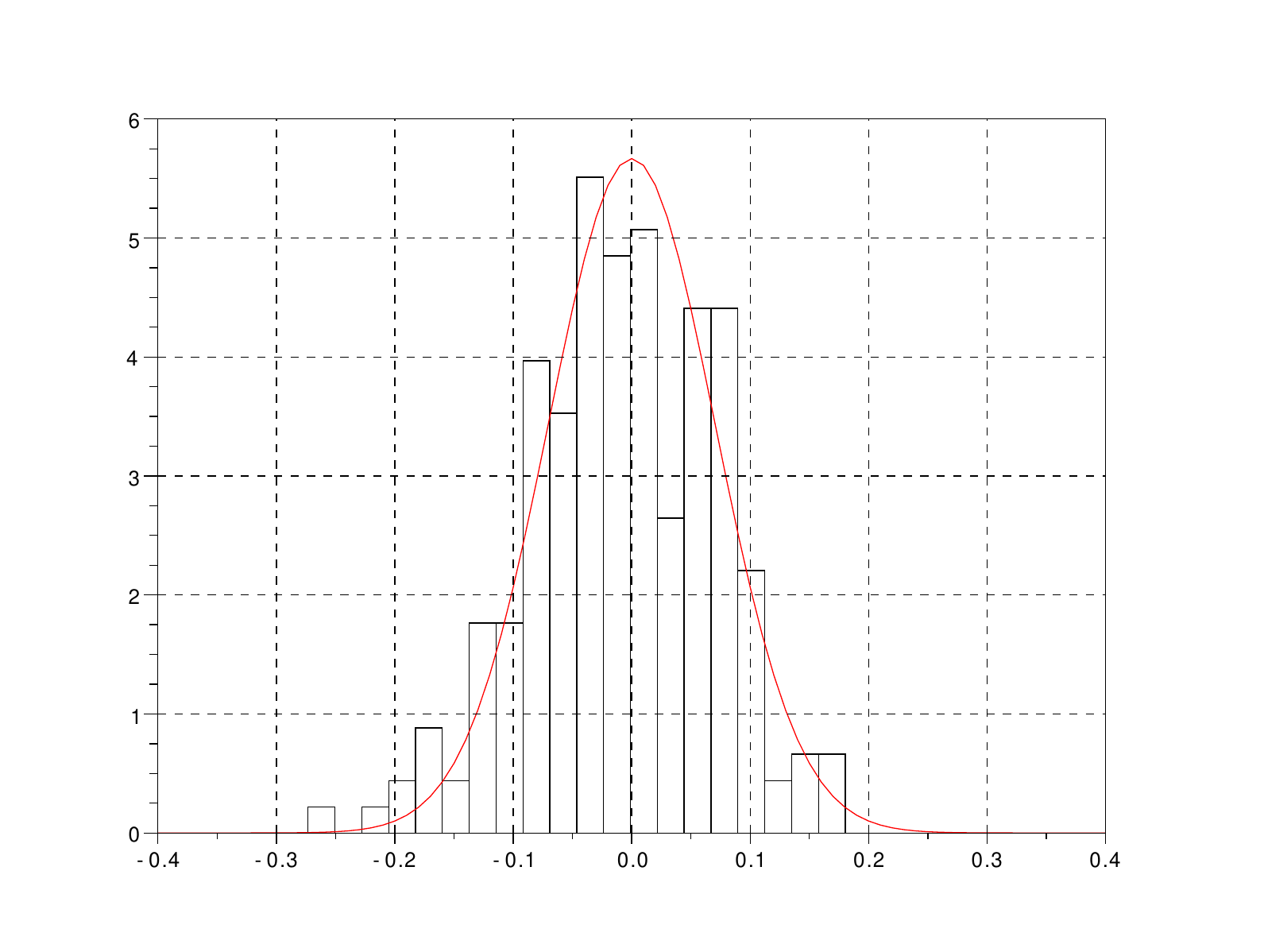} }
\vspace{0.1cm}
\medskip
\end{minipage}
\vspace{-2ex}
\caption{Asymptotic normalities of $\sqrt{n}\left(\widehat{\theta}_{n}^{1}-\theta\right)$ and $\sqrt{n}\left(\widetilde{\theta}_{n}^{1}-\theta\right)$.}
\label{cvloi}
\end{figure}


%
%


\section{PROOFS OF THE PARAMETRIC RESULTS}


\subsection{Proof of Lemma \ref{MC1}}
First, \eqref{a4} obviously implies that for all compact $B$ in $\Theta$, 
$$
\dE \left[ \sup_{t \in B } \left\vert \partial\varphi^{- 1}_t\circ \varphi_{\theta}\left( \varepsilon \right) \right\vert^2\right] <+\infty.
$$
Moreover, we already saw that the quantile function associated with the distribution of $\varepsilon$ is $F^{-1}$. Consequently, 
\begin{equation}
\label{eq:1}
\dE \left[ \sup_{t \in B } \left\vert \partial\varphi^{- 1}_t\circ \varphi_{\theta}\left( \varepsilon \right) \right\vert^2\right]= \int_0^ 1  \sup_{t \in B } \left\vert \partial\varphi^{- 1}_t\circ \varphi_{\theta}\left( F^{-1}(x) \right) \right\vert^2dx < +\infty.
\end{equation}
Now, it follows from \eqref{a2} that for all $x\in{I_{2}}$,
\begin{equation}
\label{majorationpartial}
\partial \left[ \left(F^{-1}(x)-\varphi_{t}^{-1}\circ\varphi_{\theta}\circ F^{-1}(x)\right)^{2} \right]= -2\partial\varphi_{t}^{-1}\left(\varphi_{\theta}\circ F^{-1}(x)\right)\left(F^{-1}(x)-\varphi_{t}^{-1}\circ\varphi_{\theta}\circ F^{-1}(x)\right)
\end{equation}
is a continuous function with respect to $t$. In addition, if $B$ is a compact set containing $\theta$, it follows from \eqref{a2} together with the mean value Theorem that 
there exists a constant $C_B>0$ such that
\begin{equation}
\label{Taylorfin}
\sup_{t \in B }  \left\vert  F^{-1}(x)-\varphi_{t}^{-1}\circ \varphi_{\theta}\left( F^{-1}(x) \right)\right\vert \leq  C_B \sup_{t \in B } \left\vert \partial\varphi_{t}^{-1}\circ\varphi_{\theta}\left( F^{-1}(x) \right)\right\vert.
\end{equation}
Hence, we deduce from \eqref{majorationpartial} and the previous inequality that
$$
\sup_{t \in B} \left\vert \partial \left[ \left(F^{-1}(x)-\varphi_{t}^{-1}\circ\varphi_{\theta}\circ F^{-1}(x)\right)^{2} \right]\right\vert \leq   2  C_B\sup_{t \in B }\left\vert \partial\varphi_{t}^{-1}\circ \varphi_{\theta}\left(F^{-1}(x) \right)\right\vert^2
$$
which implies by \eqref{eq:1} that
$$
\sup_{t \in B} \left\vert \partial \left[ \left(F^{-1}(x)-\varphi_{t}^{-1}\circ\varphi_{\theta}\circ F^{-1}(x)\right)^{2} \right]\right\vert
$$
is integrable with respect to $x$.
Finally, $M$ is continuously differentiable on $\Theta$ and for all $t\in{\Theta}$,
$$
M^{\prime}(t)=\int_{0}^{1}-2\partial\varphi_{t}^{-1}\left(\varphi_{\theta}\circ F^{-1}(x)\right)\left(F^{-1}(x)-\varphi_{t}^{-1}\circ\varphi_{\theta}\circ F^{-1}(x)\right)dx.
$$
$\hfill 
\mathbin{\vbox{\hrule\hbox{\vrule height1.5ex \kern.6em
\vrule height1.5ex}\hrule}}$
\subsection{Proof of Lemma \ref{MC2}}
Hypothesis \eqref{a6} implies that 
\begin{equation}
-2\partial\varphi_{t}^{-1}\left(\varphi_{\theta}\circ F^{-1}(x)\right)\left(F^{-1}(x)-\varphi_{t}^{-1}\circ\varphi_{\theta}\circ F^{-1}(x)\right)
\end{equation} 
is continuously differentiable with respect to $t$. In addition, we have 
\begin{eqnarray*}
&&\partial \left[\partial\varphi_{t}^{-1}\left(\varphi_{\theta}\circ F^{-1}(x)\right)\left(F^{-1}(x)-\varphi_{t}^{-1}\circ\varphi_{\theta}\circ F^{-1}(x)\right) \right]\\
&=&-\left[ \partial\varphi_{t}^{-1}\circ\varphi_{\theta}\circ F^{-1}(x)\right]^2+\partial^2\varphi_{t}^{-1}\circ\varphi_{\theta}\circ F^{-1}(x)\left(F^{-1}(x)-\varphi_{t}^{-1}\circ\varphi_{\theta}\circ F^{-1}(x)\right).
\end{eqnarray*}
It follows from \eqref{Taylorfin} that for every compact set $B$ containing $t$ and $\theta$, 
\begin{align*} |\partial^2\varphi_{t}^{-1}\circ\varphi_{\theta}\circ F^{-1}(x) & \left(F^{-1}(x)-\varphi_{t}^{-1}\circ\varphi_{\theta}\circ F^{-1}(x)\right)|\\ & \leqslant  C_B\sup_{t \in B} \left\vert \partial^2\varphi_{t}^{-1}\circ\varphi_{\theta}\circ F^{-1}(x) \right\vert \sup_{t \in B} \left\vert \partial\varphi_{t}^{-1}\circ\varphi_{\theta}\circ F^{-1}(x) \right\vert.
\end{align*}
Then, \eqref{a6} and \eqref{eq:1} together with the Cauchy Schwartz inequality imply that 
$$
\partial^2\varphi_{t}^{-1}\circ\varphi_{\theta}\circ F^{-1}(x)\left(F^{-1}(x)-\varphi_{t}^{-1}\circ\varphi_{\theta}\circ F^{-1}(x)\right)
$$
is integrable with respect to $x$. Hence, we have
$$\int_0^1 \sup_{t \in B}\left\vert \partial \left[\partial\varphi_{t}^{-1}\left(\varphi_{\theta}\circ F^{-1}(x)\right)\left(F^{-1}(x)-\varphi_{t}^{-1}\circ\varphi_{\theta}\circ F^{-1}(x)\right) \right]\right\vert dx <+\infty$$
which enables us to conclude that  $M$ is twice continuously differentiable on $\Theta$ and for all $t\in{\Theta}$,
\begin{align*}
M^{\prime \prime}(t)&=2\int_{0}^{1}\left[ \partial\varphi_{t}^{-1}\circ\varphi_{\theta}\circ F^{-1}(x)\right]^2dx  \\ &- 2\int_{0}^{1}\partial^2\varphi_{t}^{-1}\circ\varphi_{\theta}\circ F^{-1}(x)\left(F^{-1}(x)-\varphi_{t}^{-1}\circ\varphi_{\theta}\circ F^{-1}(x)\right) dx.
\end{align*}
$\hfill 
\mathbin{\vbox{\hrule\hbox{\vrule height1.5ex \kern.6em
\vrule height1.5ex}\hrule}}$
\subsection{Proof of Theorem \ref{ASCV}.}
Denote by $\cF_{n}$ the $\sigma $-algebra of the events occurring up to time $n$,
$\cF_{n}=\sigma(\varepsilon_0,\ldots, \varepsilon_n)$. First of all, we shall calculate the two first conditional
moments of the random variable $T_n$ given by \eqref{DefT}. On the one hand, one has
\begin{eqnarray*}
\dE[T_{n+1}|\mathcal{F}_{n}]&=&-2\dE\Bigl[\partial\varphi_{\wh{\theta}_{n}}^{-1}\left(X_{n+1}\right)\left(\varepsilon_{n+1}-\varphi_{\wh{\theta}_{n}}^{-1}\left(X_{n+1}\right)\right)|\mathcal{F}_{n}\Bigr],\\
&=&-2\dE\Bigl[\partial\varphi_{\wh{\theta}_{n}}^{-1}\circ\varphi_{\theta}(\varepsilon_{n+1})\left(\varepsilon_{n+1}-\varphi_{\wh{\theta}_{n}}^{-1}\circ\varphi_{\theta}(\varepsilon_{n+1})\right)|\mathcal{F}_{n}\Bigr].\\
\end{eqnarray*}
Moreover, as $\varepsilon_{n+1}$ is independent of $\mathcal{F}_{n}$ and $\wh{\theta}_{n}\in{\mathcal{F}_{n}}$, one can deduce from \eqref{defMprime} that
\begin{eqnarray*}
&&-2\dE\Bigl[\partial\varphi_{\wh{\theta}_{n}}^{-1}\circ\varphi_{\theta}(\varepsilon_{n+1})\left(\varepsilon_{n+1}-\varphi_{\wh{\theta}_{n}}^{-1}\circ\varphi_{\theta}(\varepsilon_{n+1})\right)|\mathcal{F}_{n}\Bigr]\\
&=&-2\int_{0}^{1}\partial\varphi_{\wh{\theta}_{n}}^{-1}\circ\varphi_{\theta}\circ F^{-1}(x)\left(F^{-1}(x)-\varphi_{\wh{\theta}_{n}}^{-1}\circ\varphi_{\theta}\circ F^{-1}(x)\right)dx\\
&=&M^{\prime}(\wh{\theta}_{n})
\hspace{1cm}\text{a.s.}
\end{eqnarray*}
which immediately leads to
\begin{equation}
\label{meTn}
\dE[T_{n+1}|\mathcal{F}_{n}]=M^{\prime}(\wh{\theta}_{n})\hspace{1cm}\text{a.s.}
\end{equation}
On the other hand,
\begin{eqnarray}
\nonumber
\dE\Bigl[T_{n+1}^2|\mathcal{F}_{n}\Bigr]&=&4\dE\Bigl[\partial\varphi_{\wh{\theta}_{n}}^{-1}\left(X_{n+1}\right)^{2}\left(\varepsilon_{n+1}-\varphi_{\wh{\theta}_{n}}^{-1}\left(X_{n+1}\right)\right)^{2}|\mathcal{F}_{n}\Bigr],\\
\label{meanT2}
&=&4\dE\Bigl[\partial\varphi_{\wh{\theta}_{n}}^{-1}\left(X_{n+1}\right)^{2}\left(\varphi_{\theta}^{-1}\left(X_{n+1}\right)-\varphi_{\wh{\theta}_{n}}^{-1}\left(X_{n+1}\right)\right)^{2}|\mathcal{F}_{n}\Bigr],
\end{eqnarray}
Moreover, it follows from the mean value Theorem that
\begin{equation}
\label{mvt}
|\varphi_{\theta}^{-1}\left(X_{n+1}\right)-\varphi_{\wh{\theta}_{n}}^{-1}\left(X_{n+1}\right)|\leq{\sup_{t\in{[a;b]}}|\partial\varphi_{t}^{-1}(X_{n+1})|\times|\widehat{\theta}_{n}-\theta|}.
\end{equation}
Consequently, the conjunction of \eqref{meanT2} and \eqref{mvt} leads to
\begin{equation}
\dE\Bigl[T_{n+1}^2|\mathcal{F}_{n}\Bigr]\leq4\left(\widehat{\theta}_{n}-\theta\right)^{2}\dE\Bigl[\sup_{t\in{[a;b]}}|\partial\varphi_{t}^{-1}(X)|^{4}\Bigr].
\end{equation}
Hence,  there exists a positive constant $C_{1}$ given by \eqref{defC1} such that
\begin{equation}
\label{msTn}
\dE\Bigl[T_{n+1}^2|\mathcal{F}_{n}\Bigr]\leq C_{1}\left(\widehat{\theta}_{n}-\theta\right)^{2}\hspace{1cm}\text{a.s.}
\end{equation}
Furthermore, for all $n\geq0$, let $V_{n}=\Big(\wh{\theta}_{n}-\theta\Big)^2$. We clearly have
\begin{eqnarray*}
V_{n+1}&=&\Big(\wh{\theta}_{n+1}-\theta\Big)^2,\\
&=&\Big(\pi_{[a;b]}\Big(\wh{\theta}_{n}-\gamma_{n+1}T_{n+1}\Big)-\theta\Big)^2,\\
&=&\Big(\pi_{[a;b]}\Big(\wh{\theta}_{n}-\gamma_{n+1}T_{n+1}\Big)-\pi_{[a;b]}(\theta)\Big)^2
\end{eqnarray*}
as we have assumed that $\theta$ belongs to $]a;b[$. Since $\pi_{[a;b]}$ is a Lipschitz function with Lipschitz constant $1$, we obtain that
\begin{eqnarray*}
V_{n+1}
&\leq&\left(\wh{\theta}_{n}-\gamma_{n+1}T_{n+1}-\theta\right)^2,\\
&\leq&V_{n}+\gamma_{n+1}^{2}T_{n+1}^{2}-2\gamma_{n+1}T_{n+1}(\wh{\theta}_{n}-\theta).
\end{eqnarray*}
Hence, it follows from \eqref{meTn} together with \eqref{msTn} that
\begin{equation}
\label{RSiegmund}
\dE[V_{n+1}|\mathcal{F}_{n}]\leq V_{n}(1+C_{1}\gamma_{n+1}^{2})-2\gamma_{n+1}(\wh{\theta}_{n}-\theta)M^{\prime}(\wh{\theta}_{n})
\hspace{5mm}\text{a.s.}
\end{equation}
In addition, as $\wh{\theta}_{n} \in [a;b]$, \eqref{a5} implies that $(\wh{\theta}_{n}-\theta)M^{\prime}(\wh{\theta}_{n})>0$.
Then, we deduce from \eqref{RSiegmund} together with Robbins-Siegmund Theorem,
see Duflo \cite{Duflo97} page 18, that the sequence $(V_n)$ converges a.s. to a finite random variable $V$
and
\begin{equation}
\label{sumRS}
\sum_{n=1}^\infty \gamma_{n+1}(\wh{\theta}_{n}-\theta)M^{\prime}(\wh{\theta}_{n})<+\infty \hspace{1cm}\text{a.s.}
\end{equation}
Assume by contradiction that $V \neq 0$ a.s. Then, one can find two constants $c$ and $d$ such that 
$$0<c<d<2\max\left(|a|,|b|\right),$$
and for
$n$ large enough, the event $\{c<| \wh{\theta}_{n}-\theta|<d\}$ is not negligible. However, on this annulus, one can also find some
constant $e>0$ such that $(\wh{\theta}_{n}-\theta)M^{\prime}(\wh{\theta}_{n})\geq e$ which, by \eqref{sumRS}, implies that
$$
\sum_{n=1}^\infty \gamma_{n}<+\infty.
$$
This is of course in contradiction with assumption \eqref{hypgamma}. Consequently, we obtain that $V=0$ a.s.
leading to the almost sure convergence of $\wh{\theta}_{n}$ to $\theta$. 
$\hfill 
\mathbin{\vbox{\hrule\hbox{\vrule height1.5ex \kern.6em
\vrule height1.5ex}\hrule}}$
\subsection{Proof of Theorem \ref{TLC}.}
Our goal is to apply Theorem 2.1 of Kushner and Yin \cite{KushnerYin03} page 330. First of all, as $\gamma_{n}=1/n$, 
the conditions on the decreasing step is satisfied. Moreover, we already saw that $\wh{\theta}_n$ converges almost surely to $\theta$.
Consequently, all the local assumptions of Theorem 2.1 of \cite{KushnerYin03} are satisfied. 
In addition, it follows from $(\ref{meTn})$ that $\dE\left[T_{n+1}|\cF_{n}\right]=M^{\prime}(\wh{\theta}_n)$ a.s. and the function $M$ is 
two times continuously differentiable. 
Hence, $M(\theta)=0$, $M^{\prime}(\theta)=0$ and $M^{\prime\prime}(\theta)>1/2$. 
Furthermore, it follows from \eqref{msTn} and the almost sure convergence of $\wh{\theta}_n$ to $\theta$ that
$$
\lim_{n\rightarrow \infty}\dE\left[T_{n+1}^{2}|\cF_{n}\right]=0\hspace{1cm}\text{a.s.}
$$
Finally, Theorem 4.1 of \cite{KushnerYin03} page 341 ensures that the sequence $(W_n)$ given by
$$
W_n=\sqrt{n}(\wh{\theta}_{n}-\theta)
$$
is tight. Then, one shall deduce from Theorem 2.1 of \cite{KushnerYin03}  that
$$
\sqrt{n}(\wh{\theta}_{n}-\theta) \liml \delta_{0}.
$$
Moreover, taking expectation on both sides of \eqref{RSiegmund} leads, for all $n\geq0$, to
\begin{equation}
\label{RSiegmund2}
v_{n+1}\leq v_{n}(1+C_{1}\gamma_{n+1}^{2})-2\gamma_{n+1}\mathbb{E}\left[(\wh{\theta}_{n}-\theta)M^{\prime}(\wh{\theta}_{n})\right]
\end{equation}
where 
$$
v_{n}=\mathbb{E}\left[\left(\wh{\theta}_{n}-\theta\right)^{2}\right].
$$
In addition, as $M^{\prime}(\theta)=0$, one have
\begin{equation}
\label{taylorMprime}
M^{\prime}(\wh{\theta}_{n})=(\wh{\theta}_{n}-\theta)\int_{0}^{1}M^{\prime\prime}(\theta+x(\wh{\theta}_{n}-\theta))dx\hspace{1cm}\text{a.s.}
\end{equation}
Consequently, it follows from \eqref{RSiegmund2} and \eqref{taylorMprime} that
\begin{equation}
\label{RSiegmund3}
v_{n+1}\leq v_{n}(1+C_{1}\gamma_{n+1}^{2})-2\gamma_{n+1}\mathbb{E}\left[(\wh{\theta}_{n}-\theta)^{2}\int_{0}^{1}M^{\prime\prime}(\theta+x(\wh{\theta}_{n}-\theta))dx\right].
\end{equation}
Finally, since $\theta\in{]a;b[}$ and $\wh{\theta}_{n}\in{[a;b]}$, $\theta+x(\wh{\theta}_{n}-\theta)\in{[a;b]}$ for all $x\in{[0;1]}$. Then, as we have supposed that $M^{\prime\prime}(t)\geq{1/2}$ for all $t\in[a;b]$, we can write that
$$
\int_{0}^{1}M^{\prime\prime}(\theta+x(\wh{\theta}_{n}-\theta))dx\geq1/2.
$$
Then, we find from \eqref{RSiegmund3} that for all $n\geq0$,
\begin{equation}
\label{RSiegmund4}
v_{n+1}\leq v_{n}(1+C_{1}\gamma_{n+1}^{2}-\gamma_{n+1}).
\end{equation}
Moreover, the standard convex inequality given for all $x\in{\mathbb{R}}$, by
$$
1-x\leq{\exp(-x)}
$$
implies that
\begin{equation}
\label{RSiegmund5}
v_{n+1}\leq v_{n}\exp\left(C_{1}\gamma_{n+1}^{2}-\gamma_{n+1}\right).
\end{equation}
An immediate recurrence in \eqref{RSiegmund5} leads to
\begin{eqnarray}
\nonumber
v_{n}&\leq& v_{0}\prod_{k=1}^{n}\exp\left(C_{1}\gamma_{k}^{2}-\gamma_{k}\right),\\
\nonumber
&\leq&v_{0}\exp\left(C_{1}\sum_{k=1}^{n}\gamma_{k}^{2}-\sum_{k=1}^{n}\gamma_{k}\right),\\
\label{RSiegmund6}
&\leq&v_{0}\exp\left(C_{1}\sum_{k=1}^{+\infty}\gamma_{k}^{2}-\sum_{k=1}^{n}\gamma_{k}\right).
\end{eqnarray}
As $\gamma_{k}=1/k$, it follows immediately from \eqref{RSiegmund6} together with
$$
\sum_{k=1}^{+\infty}\gamma_{k}^{2}=\frac{\pi^{2}}{6}
$$
and
$$
\sum_{k=1}^{n}\gamma_{k}\geq{\log(n+1)}
$$
that, for all $n\geq0$,
$$
v_{n}\leq v_{0}\frac{\exp\left(C_{1}\pi^{2}/6\right)}{n+1}.
$$
which achieves the proof of Theorem \ref{TLC}.
$\hfill 
\mathbin{\vbox{\hrule\hbox{\vrule height1.5ex \kern.6em
\vrule height1.5ex}\hrule}}$


\section{PROOFS OF THE NONPARAMETRIC RESULTS}

Recall that $f$ is the density of $\varepsilon$ and denote by $f^t$ the density of $Z\left(t \right)$. As the distribution of $Z(t)$ is $F\circ\varphi_{\theta}^{-1}\circ\varphi_{t}$, we have for all $x\in{I_{1}}$,
$$
f^t(x)= f\left(\varphi_{\theta}^{-1} \circ \varphi_t (x) \right) d\left[\varphi_{\theta}^{-1}  \circ \varphi_t \right] (x).
$$
We can note that $f^{\theta}=f$. We start by stating some facts about the densities $f^t(x)$ which will be used hereinafter. 
Firstly, we have
\begin{eqnarray*}
f^t(x)&=& f\left(\varphi_{\theta}^{-1} \circ \varphi_t (x) \right) d\left[\varphi_{\theta}^{-1}  \circ \varphi_t\right] (x)\\
&=&f\left(\varphi_{\theta}^{-1} \circ \varphi_t (x) \right) d \varphi_t(x) d\left[\varphi_{\theta}^{-1}\right]  \left(  \varphi_t (x) \right).
\end{eqnarray*}
Hence, the hypothesis \eqref{ad1}, \eqref{ad2} and \eqref{ad3} implies that $f^t$ is twice continuously differentiable with respect to $x$. Moreover, for all $x\in{I_{1}}$,
$$
df^t(x)= f\left(\varphi_{\theta}^{-1} \circ \varphi_t (x) \right) d^2\left[\varphi_{\theta}^{-1}  \circ \varphi_t \right] (x) +  f '\left(\varphi_{\theta}^{-1} \circ \varphi_t (x) \right) \left( d\left[\varphi_{\theta}^{-1}  \circ \varphi_t\right] (x)\right)^2
$$
and
\begin{align*}
d^2f^t(x)= & f\left(\varphi_{\theta}^{-1} \circ \varphi_t (x) \right) d^3\left[\varphi_{\theta}^{-1}  \circ \varphi_t \right] (x)\\ & + 3 f^{\prime}\left(\varphi_{\theta}^{-1} \circ \varphi_t (x) \right) d\left[\varphi_{\theta}^{-1}  \circ \varphi_t \right] (x) d^2\left[\varphi_{\theta}^{-1}  \circ \varphi_t \right] (x)\\ & +  f^{\prime\prime}\left(\varphi_{\theta}^{-1} \circ \varphi_t (x) \right) \left( d\left[\varphi_{\theta}^{-1}  \circ \varphi_t \right] (x)\right)^3.
\end{align*}
Hence, it follows from \eqref{ad1} to \eqref{ad4} that $f^t(x)$, $df^t(x)$ and $d^2f^t(x)$ are bounded on $\Theta \times I_{1}$. Secondly, \eqref{ad5} implies that $f^t (x)$ is also continuously differentiable with respect to $(t , x)$ and we have for all $t\in{\Theta}$ and for all $x\in{I_{1}}$,
$$
\partial f^t (x) = f\left(\varphi_{\theta}^{-1} \circ \varphi_t (x) \right)\partial  d\left[\varphi_{\theta}^{-1}  \circ \varphi_t \right] (x) +  f '\left(\varphi_{\theta}^{-1} \circ \varphi_t (x) \right)  d\left[\varphi_{\theta}^{-1}  \circ \varphi_t \right] (x)\partial \left[\varphi_{\theta}^{-1}  \circ \varphi_t \right] (x),
$$
where 
$$
\partial \left[\varphi_{\theta}^{-1}  \circ \varphi_t \right] (x)= \partial\varphi_t(x) d \left[\varphi_{\theta}^{-1}   \right] \left( \varphi_t(x)\right),
$$
and 
$$
\partial  d\left[\varphi_{\theta}^{-1}  \circ \varphi_t \right] (x)  = \partial d\varphi_t(x) d \left[\varphi_{\theta}^{-1}   \right] \left( \varphi_t (x)\right)+ \partial\varphi_t(x)d \varphi_{t}(x) d^2 \left[\varphi_{\theta}^{-1}   \right] \left( \varphi_t (x)\right).
$$
Hence, under \eqref{ad4} and \eqref{ad5} 
\begin{equation}
\label{boundft}
\sup_{t\in \Theta} \left\vert \partial f^t (x) \right\vert < +\infty.
\end{equation}
\subsection{Proof of Theorem \ref{th:cvps}}
Recall that $\mathcal{F}_{n}=\sigma \{ \varepsilon_0,\dots,\varepsilon_{n}\}$ and note that  $\widehat{\theta}_{n-1}$ is measurable with respect to $\mathcal{F}_{n-1}$. Denote, for all $x\in{I_{1}}$,
$$
W_n(x)=\frac {1}{h_n} K \left( \frac{x-{Z}_n(\widehat{\theta}_{n-1})}{h_n} \right).
$$
Then, we have the decomposition for all $x\in{I_{1}}$,
$$
n\widehat{f}_n(x) =M_n(x)+ N_n(x),
$$ where 
\begin{equation}
\label{defMn}
M_n(x) =  \sum_{i=1}^n \E \left[ W_i(x) | \mathcal{F}_{i-1}\right]
\end{equation}
and
\begin{equation}
\label{defNn}
N_n(x) =  \sum_{i=1}^n \left(W_i(x)- \E \left[ W_i(x) | \mathcal{F}_{i-1}\right] \right).
\end{equation}
On the one hand, for a fixed $\widehat{\theta}_{n-1}$, recall that  $f^{\widehat{\theta}_{n-1}}$ denotes the density of  ${Z}_n(\hat{\theta}_{n-1})$. Then, with the changes of variables $v=\frac{x-u}{h_i}$ we have that
\begin{align*}
\E \left[ W_i(x) | \mathcal{F}_{i-1}\right] & = \int_\R \frac {1}{ h_i}K \left( \frac{x-u}{h_i} \right) f^{\widehat{\theta}_{i-1}}(u)du \\ 
& = \int_\R  K(v) f^{\widehat{\theta}_{i-1}} (x-h_iv) dv.
\end{align*} 
Hence,
$$\E \left[ W_i(x) | \mathcal{F}_{i-1}\right] - f^{\widehat{\theta}_{i-1}}(x) =\int_\R \left(f^{\widehat{\theta}_{i-1}}(x-vh_{i})-  f^{\widehat{\theta}_{i-1}} (x)\right) K(v) dv.$$
Moreover, we already saw that $f^{t}$ is twice continuously differentiable. Thus, for all $t\in \Theta$, there exists a real $z_{i}=x-vh_{i}y$, with $0<y<1$, such that
\begin{equation}
\label{Taylorft}
f^t(x-vh_{i})-  f^t (x)= -vh_{i} df^t (x) + \frac{(vh_{i})^2}{2} d^2 f^t (z_{i}).
\end{equation}
Using the parity of $K$ and preliminary remarks on $d^2f^t$, we obtain that 
$$\int_\R \left(f^t(x-vh_{i})-  f^t (x)\right) K(v) dv = \int_\R  \frac{(vh_{i})^2}{2} d^2 f^t (z_{i})K(v) dv$$
which implies that
$$\sup_{t \in \Theta} \left\vert \int_\R \left(f^t(x-vh_{i})-  f^t (x)\right) K(v) dv  \right\vert \leqslant \frac{h_{i}^2}{2} \sup_ {t \in \Theta, z \in I_{1} }\left\vert d^2 f^t (z) \right\vert \int_\R  v^2K(v) dv. $$
Consequently, there exists $C_{2}>0$ such that
\begin{equation}
\label{control1}
\left\vert \E \left[ W_i(x) | \mathcal{F}_{i-1}\right] - f^{\widehat{\theta}_{i-1}}(x) \right\vert \leqslant C_2 h_{i}^ 2.
\end{equation}
Moreover, since $f^{t}$ is a continuous function with respect to $t$,
and $\widehat{\theta}_{n}$ converges to $\theta$ almost surely, we have for all $x\in{I_{1}}$,
\begin{equation}
\label{cvpsf}
f^{\widehat{\theta}_{n-1}}(x) \xrightarrow{i \rightarrow \infty} f(x)\hspace{6mm}\textnormal{ a.s.}
\end{equation}
Consequently, Cesaro's Theorem with \eqref{control1} imply that
\begin{equation}
 \label{firstterm}
\frac{1}{n} M_n(x)\xrightarrow{n \rightarrow \infty} f(x) \hspace{6mm}\textnormal{ a.s.}
\end{equation}
On the other hand, since $K$ is bounded, $(N_n(x))$ is a square integrable martingale whose predictable quadratic variation is given by
\begin{eqnarray*}
<N(x)>_n &=& \sum_{i=1}^n \E \left[N_i^2(x) | \mathcal{F}_{i-1}\right] - N_{i-1}^2(x),\\
&=&\sum_{i=1}^n \E \left[ W_i^2(x) | \mathcal{F}_{i-1}\right] - \E^{2} \left[ W_i(x) | \mathcal{F}_{i-1}\right] .
\end{eqnarray*}
Moreover, we also have
$$\E \left[ W_i^2(x) | \mathcal{F}_{i-1}\right] =\frac{1}{h_i} \int  K^2(v) f^{\widehat{\theta}_{i-1}} (x-h_iv) dv.$$
However, \eqref{Taylorft} together with the regularity of $f^t(x)$ and the parity of $K$ imply that
$$\sup_{t \in \Theta} \left\vert \int_\R \frac{1}{h_{i}}\left(f^t(x-vh_{i})- f^t (x)\right) K^2(v) dv  \right\vert \leqslant \frac{h_{i}}{2} \sup_ {t \in \Theta, z \in I_{1} }\left\vert d^2 f^t (z) \right\vert  \int_\R  v^2K^2(v) dv. $$
Consequently, there exists $C_3>0$ such that
\begin{equation}
\label{control2}
\left\vert \E \left[ W^2_i(x) | \mathcal{F}_{i-1}\right] - \frac{\nu^2}{h_i}f^{\widehat{\theta}_{i-1}}(x) \right\vert \leqslant C_3 h_{i}
\end{equation}
where $\nu^2 = \int_\R K^2(u)du$.
It also follows from \eqref{cvpsf} and Toeplitz Lemma that 
$$
\lim_{n\rightarrow \infty } \frac{1}{\sum_{i=1}^n h_i^{-1}}\sum_{i=1}^n\frac{1}{ h_i}f^{\widehat{\theta}_{i-1}}(x) =f(x) \hspace{6mm}\textnormal{ a.s.}
$$
In addition, we deduce from the elementary equivalence
$$
\sum_{i=1}^n \frac{1}{ h_i} \sim \frac{n^{1+\alpha}}{\alpha +1}
$$
that
$$
\lim_{n\rightarrow \infty } \frac{1}{n^{1+\alpha}}\sum_{i=1}^n\frac{\nu^2}{ h_i}f^{\widehat{\theta}_{i-1}}(x) = \frac{\nu^2}{\alpha +1 }f(x) \hspace{6mm}\textnormal{ a.s.}
$$
Finally, \eqref{control2} leads to
\begin{equation}
\label{mean2Wi}
\lim_{n\rightarrow \infty } \frac{1}{n^{1+\alpha}}\sum_{i=1}^n \E \left[ W_i^2(x) | \mathcal{F}_{i-1}\right] =  \frac{\nu^2}{\alpha +1 }f(x) \hspace{6mm}\textnormal{ a.s.}
\end{equation}
Moreover, \eqref{control1} together with \eqref{cvpsf} and Cesaro's Theorem imply that
\begin{equation}
\label{secondpartN}
\lim_{n\rightarrow \infty } \frac{1}{n}\sum_{i=1}^n \E^{2} \left[ W_i(x) | \mathcal{F}_{i-1}\right] = f^{2}(x)
\end{equation}
Then, as $\alpha>0$, we can conclude from \eqref{mean2Wi} and \eqref{secondpartN} that
$$
\lim_{n\rightarrow \infty }\frac{ <N(x)>_n}{n^{1+\alpha}} = \frac{\nu^2}{\alpha +1 }f(x) \hspace{6mm}\textnormal{ a.s.}
$$
Consequently, we obtain from the strong law of large numbers for martingales given e.g. by Theorem 1.3.15 of \cite{Duflo97} that for any $\gamma>0$,  $\left(N_n(x) \right)^2= o\left( n^{1+\alpha} \left(\log(n)\right) ^{1+\gamma}\right)$ a.s. which ensures that for all $x \in I_{1}$, 
\begin{equation} \label{secondterm}
\frac{1}{n} N_n(x)\xrightarrow{n \rightarrow \infty}  0 \quad {\rm a.s.}
\end{equation}
Finally, combining  \eqref{firstterm} and \eqref{secondterm}, one obtain that for all $x \in I_{1}$,
\begin{equation} 
\widehat{f}_n(x) \xrightarrow{n \rightarrow \infty}  f(x) \quad {\rm a.s.}
\end{equation}
ending the proof of Theorem \ref{th:cvps}.
$\hfill 
\mathbin{\vbox{\hrule\hbox{\vrule height1.5ex \kern.6em
\vrule height1.5ex}\hrule}}$

\subsection{Proof of Theorem \ref{th:cvl2}}
 Our aim is now to show that for all $x\in I_{1}$,
 $$\E\left[ \left\vert \widehat{f}_n\left( x\right) - f(x) \right\vert^2 \right] \xrightarrow{n \rightarrow \infty} 0 .$$
It follows from the classical decomposition bias-variance that
\begin{equation}
\label{decompomean}
\E\left[ \left\vert \widehat{f}_n\left( x\right) - f(x) \right\vert^2 \right]=B_{n}(x)+V_{n}(x)
\end{equation}
where
\begin{equation}
\label{defBn}
B_{n}(x)=\left\vert\E\left[  \widehat{f}_n\left( x\right)\right] - f(x) \right\vert^2
\end{equation}
and
\begin{equation}
\label{defVn}
V_{n}(x)=\E\left[ \left\vert \widehat{f}_n\left( x\right) - \E\left[\widehat{f}_n(x)\right] \right\vert^2 \right]. 
\end{equation}
Firstly, we can write 
\begin{eqnarray*}
\E\left[  \widehat{f}_n\left( x\right)\right] - f(x) &=& \frac{1}{n} \sum_{i=1}^n\E \big[W_i(x)- f(x)\big],\\
&=&\frac{1}{n} \sum_{i=1}^n\E\big[ \E\left[W_i(x) |\mathcal{F}_{i-1}\right]- f(x)\big].
\end{eqnarray*}
In addition, \eqref{control1} implies that
\begin{equation}
\label{cvWi1}
\E \left[\left\vert \E \left[ W_n(x) | \mathcal{F}_{n-1}\right] - f^{\widehat{\theta}_{n-1}}(x) \right\vert \right] \xrightarrow{n \rightarrow \infty} 0
\end{equation}
It also follows from the boundeness of $f^{\widehat{\theta}_{n-1}}(x)$ and \eqref{cvpsf} together with the dominated convergence Theorem that
\begin{equation}
\label{cvWi2}
\E \left[\left\vert f^{\widehat{\theta}_{n-1}}(x)- f(x) \right\vert \right] \xrightarrow{n \rightarrow \infty} 0.
\end{equation}
Hence, we deduce from \eqref{cvWi1} and \eqref{cvWi2} that
$$ 
\E\big[ \E\left[W_i(x)|\mathcal{F}_{n-1}\right]- f(x)\big] \xrightarrow{n \rightarrow \infty} 0,
$$
which implies by Cesaro's Theorem that
$$
\left\vert\E\left[  \widehat{f}_n\left( x\right)\right] - f(x) \right\vert \xrightarrow{n \rightarrow \infty} 0  
$$
leading to
\begin{equation}
\label{cvBn}
B_{n}(x)\xrightarrow{n \rightarrow \infty} 0.
\end{equation}
Secondly, we focus on the variance term $V_n(x)$. For all $1\leq{i}\leq{n}$ and for all $x\in{I_{1}}$, denote by $U_{i}(x)$ the sequence
\begin{equation}
\label{defUix}
U_i(x)=  W_i(x) - \E\left[  W_i(x) \right].
\end{equation}
Then, we have the decomposition
\begin{equation}
\label{decompoV}
V_{n}(x)=\frac{1}{n^2} \sum_{i=1}^n\E\left[   U_i(x) ^2 \right]  + \frac{2}{n^2} \sum_{i=1, i<j}^n\E\left[  U_i(x) U_j(x) \right]. 
\end{equation}
If $i<j$, we have  
$$
\E\left[  U_i(x) U_j(x) | \mathcal{F}_{j-1} \right] = U_i(x) \E\left[   U_j(x) | \mathcal{F}_{j-1} \right].
$$
In addition, \eqref{control1} implies that
\begin{eqnarray*}
\left\vert \E\left[   U_j(x)| \mathcal{F}_{j-1} \right]-f^{\widehat{\theta}_{j-1}}(x)+\E\left[ f^{\widehat{\theta}_{j-1}}(x)\right]\right\vert\leq 2C_2 h_{j }^2.
\end{eqnarray*}
Hence, we obtain that
$$
-2C_2 h_{j }^2\left\vert U_i(x)\right\vert \leq\E\left[   U_i(x)U_j(x)| \mathcal{F}_{j-1} \right]-U_i(x)f^{\widehat{\theta}_{j-1}}(x)+U_i(x)\E\left[ f^{\widehat{\theta}_{j-1}}(x)\right]\leq 2C_2 h_{j }^2\left\vert U_i(x)\right\vert.
$$
Thus, taking expectation in the previous inequality leads to
$$
-2C_2 h_{j }^2\E\left[\left\vert U_i(x)\right\vert\right]\leq\E\left[ U_i(x)U_j(x)\right]-\E\left[U_i(x)f^{\widehat{\theta}_{j-1}}(x)\right]+\E\left[U_i(x)\right]\E\left[ f^{\widehat{\theta}_{j-1}}(x)\right]\leq 2C_2 h_{j }^2\E\left[\left\vert U_i(x)\right\vert\right].
$$
Finally, we obtain that
\begin{equation}
\label{CS1part}
\left\vert\E\left[ U_i(x)U_j(x)\right]\right\vert\leq{\left\vert\E\left[U_i(x)f^{\widehat{\theta}_{j-1}}(x)\right]-\E\left[U_i(x)\right]\E\left[ f^{\widehat{\theta}_{j-1}}(x)\right]\right\vert+ 2C_2 h_{j }^2\E\left[\left\vert U_i(x)\right\vert\right]}.
\end{equation}
Moreover, we have the following equality
\begin{equation}
\label{CS2part}
\E\left[U_i(x)f^{\widehat{\theta}_{j-1}}(x)\right]-\E\left[U_i(x)\right]\E\left[ f^{\widehat{\theta}_{j-1}}(x)\right]=\E\left[U_i(x)\left(f^{\widehat{\theta}_{j-1}}(x)-f(x)\right)\right]+\left(f(x)-\E\left[ f^{\widehat{\theta}_{j-1}}(x)\right]\right)\E\left[U_i(x)\right].
\end{equation}
Consequently, \eqref{CS1part} and \eqref{CS2part} together with Cauchy-Schwartz's inequality imply that
\begin{equation}
\label{CSUiUj}
\E\left[ | U_i(x) U_j(x)| \right]\leq2\sqrt{\E\left[ U_i(x)^{2} \right]} \left(\sqrt{\E\left[\left( f^{\widehat{\theta}_{j-1}}(x) -f(x) \right)^{2} \right] }+C_2 h_{j }^2\right).
\end{equation}
The definition \eqref{defUix} of $U_{i}(x)$ also leads to
$$
\E\left[U^{2}_i(x)\right]  \leq{\E\left[W^{2}_i(x)\right]}
$$
which implies by \eqref{control2} that
\begin{equation}
\label{controlUi2}
\E\left[U_i^{2}(x)\right]\leq{\frac{\nu^{2}}{h_{i}}\E\left[f^{\widehat{\theta}_{i-1}}(x)\right]+C_3h_{i}}.
\end{equation}
From now, denote by $C$ a constant which does not depend on $n$.
On the one hand, recall that \eqref{mean2theta} implies that for all $n\geq0$,
\begin{equation}
\label{quadraticmeantheta}
\E\left[ \left\vert {\widehat{\theta}_{n}} -\theta \right\vert^ 2 \right] \leq \frac{C}{{n}}.
\end{equation} 
On the other hand, using the regularity of $f$, we obtain that for all $x\in{I_{1}}$,
 $$
 \left\vert {f}^{t}\left( x\right) - {f}\left( x\right)  \right\vert \leq \sup_{t \in \Theta} \left\vert \partial f^t(x) \right\vert  \left\vert t-\theta\right\vert.
 $$ 
Hence, \eqref{boundft} and \eqref{quadraticmeantheta} lead to
\begin{equation}
\label{mean2f}
\sqrt{\E\left[ \left\vert f^{\widehat{\theta}_{n-1}}(x) -f(x)\right\vert^{2} \right] }\leq \frac{C}{\sqrt{n}}.
\end{equation}
Then, the conjunction of \eqref{CSUiUj}, \eqref{controlUi2} and \eqref{mean2f} implies that
\begin{equation}
\label{CSUiUj2}
\E\left[ | U_i(x) U_j(x)| \right]\leq 2\left( \sqrt{\frac{\nu^{2}}{h_{i}}\E\left[f^{\widehat{\theta}_{i-1}}(x)\right]+C_3h_{i}}\right)\left( \frac{C}{\sqrt{j}} +C_2 h_{j }^2 \right).
\end{equation}
Finally, using the boundedness of $f^t(x)$, we obtain that
\begin{equation}
\label{majoUiUj}
\E\left[ | U_i(x) U_j(x)| \right]\leqslant C\left(\frac{1}{\sqrt{j h_{i}}}+\frac{h_{j}^{2}}{\sqrt{h_i}}\right).
\end{equation}
Moreover, if $h_{n}=1/n^{\alpha}$, one have
\begin{equation*}
 \sum_{i=1, i<j}^n\frac{1}{\sqrt{j h_{i}}}=\sum_{j=2}^n\frac{1}{j^{1/2}} \sum_{i=1}^{j-1}{i}^{\alpha/2}\leq\sum_{j=2}^n\frac{{j}^{\alpha/2 +1}}{j^{1/2}}\leq n^{\frac{3+\alpha}{2}}
\end{equation*}
and
\begin{equation*}
\sum_{i=1, i<j}^n\frac{h_{j}^{2}}{\sqrt{h_i}}=\sum_{j=2}^n h_{j}^{2}\sum_{i=1}^{j-1}i^\frac{\alpha}{ 2}\leq\sum_{j=2}^n\frac{j^{\frac{\alpha}{ 2} +1}}{{j}^{2\alpha}}\leq n^{2- 3 \frac{\alpha}{4}}.
\end{equation*}
Consequently,
one deduce from the two elementary previous calculations and from \eqref{majoUiUj} that
\begin{equation}
\label{eqfinalUiUj}
\frac{1}{n^{2}} \sum_{i=1, i<j}^n\E\left[ | U_i(x) U_j(x)| \right] \leq C \left(n^{\frac{-1+\alpha}{2}}+n^{-3 \frac{\alpha}{4}}\right)
\end{equation}
which tends to $0$ as $n$ goes to infinity, as $0<\alpha<1$.
In addition, thanks to \eqref{controlUi2} and the boundeness of $f^{t}$, we have
\begin{equation}
\label{eqfinalUi2}
\frac{1}{n^{2}}\sum_{i=1}^n\E\left[   U^2_i(x)  \right]\leq C \frac{1}{n^{2}}\sum_{i=1}^n \frac{1}{h_i} \leq C \frac{n^{\alpha +1}}{n^2} \leq  Cn^{-1+\alpha}
\end{equation}
which tends to $0$ as $n$ goes to infinity, as $\alpha<1$.
Hence, \eqref{decompoV}, \eqref{eqfinalUiUj} together with \eqref{eqfinalUi2} let us to conclude that for all $x\in{I_{1}}$,
\begin{equation}
\label{cvVn}
V_{n}(x)\xrightarrow{n \rightarrow \infty} 0.
\end{equation}
Finally, \eqref{decompomean}, \eqref{cvBn} and \eqref{cvVn} let us to achieve the proof of Theorem \ref{th:cvl2}.
$\hfill 
\mathbin{\vbox{\hrule\hbox{\vrule height1.5ex \kern.6em
\vrule height1.5ex}\hrule}}$

\bibliographystyle{abbrv}
\bibliography{deformRV} 

\end{document}